\newtheorem{theorem}{Theorem}
\theoremstyle{plain}
\newtheorem{conjecture}{Conjecture}
\newtheorem{lemma}{Lemma}
\newtheorem{proposition}{Proposition}
\newtheorem{remark}{Remark}
\numberwithin{equation}{section}
\begin{document}

\title{On extremums of sums of powered distances to a finite set of points}

\author{Nikolai Nikolov and Rafael Rafailov}
\address
{Institute of Mathematics and Informatics,
Bulgarian Academy of Sciences,
Acad. G. Bonchev 8, 1113 Sofia,
Bulgaria}

\email{nik@math.bas.bg}
\address
{University of California, Berkeley}
\email{rafael.rafailov@yahoo.com}

\subjclass[2000]{Primary: 52A40.}

\keywords{powered distance, regular polytope}

\dedicatory{}

\begin{abstract}
In this paper we investigate the extremal properties of the sum
$$\sum_{i=1}^n|MA_i|^{\lambda},$$
where $A_i$ are vertices of a regular simplex, a cross-polytope
(orthoplex) or a cube and $M$ varies on a sphere concentric to the
sphere circumscribed around one of the given polytopes. We give
full characterization for which points on $\Gamma$ the extremal
values of the sum are obtained in terms of $\lambda$. In the case
of the regular dodecahedron and icosahedron in $\mathbb{R}^3$ we
obtain results for which values of $\lambda$ the corresponding sum
is independent of the position of $M$ on $\Gamma$. We use
elementary analytic and purely geometric methods.
\end{abstract}
\maketitle
\section{Introduction}

We investigate problems of the extremality of sums of the kind
\begin{equation}\label{OsSu}\sum_{i=1}^n|MA_i|^{\lambda},\end{equation}
where $A_i$ are vertices of a regular simplex, a cross-polytope
(orthoplex) or a cube and $M$ varies on a sphere concentric to the
sphere circumscribed around one of the given polytopes.

Such questions arise frequently as physical problems. The case $\lambda=-1$ is equivalent to finding the points
on the sphere with maximal and minimal potential. There is an extensive list of articles on the topic of finding extremal point configurations on the sphere with respect to some potential function \cite{Energy1}, \cite {Energy2}, and an experimental approach was presented in \cite{Energy3}. Later the question arised to find extremums on the sphere with respect to some potential function and fixed base points.

This proves to be a difficult task, and no complete classification exists. Even in the case when the potential function is the powered Euclidean distance, which we investigate in this article, extremal points vary according to the paramether $\lambda$.

The results given in the currennt article for the case $\lambda=-1$ can also be used to obtain some restrictions about the zeros of the gradient of the electric field, generated by point-charges at the points $A_i$.

We also investigate the values of $\lambda$ for which \eqref{OsSu} is independent of the position of the point $M$ on the sphere. This values correspond to codes with certain strenght.
Early works on this problem characterize the values of $\lambda$ for which the
corresponding sum is independent of the movement of $M$ for the vertices of the regular polygon. Later papers consider the more general problem for multidimensional polytopes.

Of interest is also the situation in which the parameter $\lambda$ is fixed, but the base points are variable. This gives rise to a family of so-called polarization inequalities \cite{Saaf}, \cite{Chebyshev}.
In \cite{Chebyshev}, the
problem is considered for arbitrary $n$ points on the unit circle,
$M$ also on the circle and $\lambda=-2$ and this case is fully
resolved. In \cite {Saaf} the the question is considered in the plane, where the potential is an abitrary convex function on the geodesic distances between points on the circle.

The planar case of the problem we investigate here is consudered in \cite{Pacific}, where a
characterization of the extremal values of the sum \eqref{OsSu} is
given for arbitrary three points and $\lambda\in(0;2)$. The author of that publication also obtains results about the function when the base points are
vertices of the regular $n$-gon and $\lambda\in(0;2n)$. Later this
results are improved and full characterization has been given in
the case of three base points or when the base points are vertices
of a regular polygon in \cite{RN}.

Previous results also include \cite{Pacific2}, where partial results are
obtained when the base points are the vertices of the regular simplex, cross-polytope
and cube when $\lambda\in(0;2)$, using an integral transform from metric geometry. In this paper we give full
characterization and expand those result to all values of
$\lambda$. We give full characterization of the sum \eqref{OsSu}
when $A_i$ are vertices of a regular simplex, a cross-polytope or
a cube and $M$ is a point on a sphere concentric to the sphere
circumscribed around the given polytopes.

In the case of the
regular dodecahedron and icosahedron in $\mathbb{R}^3$ we obtain
results for which values of $\lambda$ the corresponding sum is
independent of the position of $M$ on $\Gamma$.

We begin with consideration of the planar case, as we will later use results from this section.

\section{Planar case}
We begin with a few results for a planar equivalent to the given
problem, as this will be the basis to continue in higher
dimensions.

Let $A_i, i=1,\ldots,n,$ be the vertices of a regular $n$-gon
inscribed in the unit circle. Now assume that $\Gamma$ is a circle
concentric to the circumscribed circle. Put
$B_i=OA_i\bigcap\Gamma$, where $O$ is the center of the $n$-gon.

Let $X\in\Gamma$ be a variable point and
$$R_n(X,\lambda)=\sum_{i=1}^n(\sqrt{|XA_i|^2+h})^{\lambda},$$
where $h\geq0$ i some fixed real number. In a previous article
\cite{RN} the authors have given full characterization of
$R_n(X,\lambda)$ in terms of $\lambda$ when $h=0$. It is easy and
straightforward to modify the proof given there to verify that the
following Theorem holds for all $h\geq0$.

\begin{theorem}\label{OT}
\begin{enumerate}

\item $\lambda<0$. The minimum of $R_n(X,\lambda)$ is achieved
when $X$ bisects the arc between consecutive vertices of
$B_1\ldots B_n$ and the maximum when $X\equiv B_i$. This function
is not bounded in the case when $\Gamma$ is the circumscribed
circle around $A_1\ldots A_n$ and $h=0$.($X\to B_i$ for some $i$).

\item $0\leq\lambda< 2n$. If  $\lambda$ is an even integer, then
$R_n(X,\lambda)$ is independent of the position of $X$ on
$\Gamma$.\\ Otherwise let $m$ be such an integer, that
$2m\leq\lambda\leq2m+2$.\\ If $m$ is even (odd) then
$R_n(X,\lambda)$ is maximal (minimal) if and only if $X$ bisects
the arc between consecutive vertices of $B_1\ldots B_n$. Moreover
$R_n(X,\lambda)$ is minimal (maximal) if and only if $M\equiv
B_i$.

\item $2n\leq\lambda$. The maximum (minimum) of $R_n(X,\lambda)$
is obtained when $X$ coincides with one of the vertices of the
$B_1\ldots B_n$ when $n$ is even (odd) and the minimum (maximum)
is achieved when $X$ bisects the arc between consecutive vertices
when $n$ is even (odd).

\end{enumerate}
\end{theorem}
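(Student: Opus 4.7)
The plan is to reduce the theorem to the form treated in \cite{RN} by observing that the shift $|XA_i|^2 \mapsto |XA_i|^2 + h$ modifies only one parameter in the underlying Fourier expansion while leaving the qualitative sign structure intact. First I would parametrize $\Gamma$ by an angle $\theta$ (taking $\Gamma$ to have radius $r>0$) and use the law of cosines to write, with $A_i = (\cos(2\pi i/n), \sin(2\pi i/n))$,
\[
|XA_i|^2 + h = \alpha - \beta\cos\!\bigl(\theta - 2\pi i/n\bigr), \qquad \alpha := 1+r^2+h,\ \beta := 2r.
\]
Since $\alpha - \beta = (1-r)^2 + h \geq 0$, the function $g(\phi) := (\alpha - \beta\cos\phi)^{\lambda/2}$ is smooth and strictly positive, except in the degenerate subcase $r=1,\, h=0$, which is responsible for the unboundedness caveat in part (1). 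Then $R_n(X,\lambda) = \sum_{i=1}^n g(\theta - 2\pi i/n)$.

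Next I would use that $g$ is even and $2\pi$-periodic to expand $g(\phi) = \sum_{k\geq 0} c_k \cos(k\phi)$ and observe that averaging over the $n$ equally spaced shifts annihilates every harmonic of index not divisible by $n$, giving
\[
R_n(X,\lambda) = n c_0 + n\sum_{j\geq 1} c_{jn}\cos(jn\theta).
\]
All critical points of $R_n$ on $\Gamma$ therefore lie at $\theta = \pi k/n$, and the analysis reduces to determining the signs of the coefficients $c_{jn}$. To compute these, I factor $\alpha - \beta\cos\phi = A\,|1-q e^{i\phi}|^2$ with $A := (\alpha + \sqrt{\alpha^2-\beta^2})/2$ and $q := \beta/(2A) \in [0,1)$, and then expand $|1 - q e^{i\phi}|^\lambda$ via the binomial series to obtain
\[
c_{jn} = 2 A^{\lambda/2}(-q)^{jn}\sum_{k=0}^\infty \binom{\lambda/2}{k}\binom{\lambda/2}{k+jn}\, q^{2k}.
\]

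The even-integer subcase of part (2) is then immediate: if $\lambda = 2m$ with $m < n$, every $c_{jn}$ with $j \geq 1$ vanishes because $\binom{m}{k+jn} = 0$, so $R_n$ is constant on $\Gamma$. For noninteger $\lambda \in (2m, 2m+2)$ with $m < n$, the sign analysis of the alternating binomial coefficients (as carried out in \cite{RN}) yields $\mathrm{sgn}(c_n) = (-1)^{m+1}$, and the $c_n\cos(n\theta)$ term dominates the remaining tail, producing the extremal configurations claimed in part (2). Part (1), $\lambda < 0$, follows from the strict convexity of $g$ in $\cos\phi$ combined with a Jensen-type argument that forces minima at arc midpoints and maxima at the vertices $B_i$.

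The hard part will be the regime $\lambda \geq 2n$ of part (3), where the harmonics $c_{2n}, c_{3n}, \ldots$ need not be small compared with $c_n$, so a pure Fourier-dominance argument fails. Following \cite{RN}, I would instead differentiate $R_n$ in $\theta$, convert the derivative via the factorization above into a fixed-sign rational expression in $\cos(n\theta)$, and read off its sign on $(0,\pi/n)$ to pin down whether the extremum at a vertex $B_i$ is a maximum or a minimum according to the parity of $n$. The crucial point that makes the modification from $h=0$ to $h>0$ truly routine is that replacing $1+r^2$ by $\alpha = 1+r^2+h$ preserves $\alpha > \beta \geq 0$, and hence preserves $q \in [0,1)$ and $A > 0$, so every sign step from \cite{RN} transports verbatim. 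The only item that genuinely changes with $h$ is the unboundedness exception in part (1), which occurs precisely when $X$ is allowed to coincide with some $A_i$, i.e., when $r = 1$ and $h = 0$.
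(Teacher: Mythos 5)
The first thing to say is that the paper does not actually prove Theorem \ref{OT}: it states that the case $h=0$ is proved in \cite{RN} and asserts that the modification to $h\ge 0$ is ``easy and straightforward.'' So your proposal is really a reconstruction of the argument of \cite{RN} plus the $h$-perturbation, and on the one point the paper is actually claiming, you have it exactly right: writing $|XA_i|^2+h=\alpha-\beta\cos(\theta-2\pi i/n)$ with $\alpha=1+r^2+h$, $\beta=2r$, the only effect of $h$ is to increase $\alpha$, which preserves $\alpha\ge\beta$ and hence the factorization parameters $A>0$, $q\in[0,1)$; every sign computation then transports verbatim, and the degenerate case $\alpha=\beta$ (i.e.\ $r=1$, $h=0$) is precisely the unboundedness exception in part (1). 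Your Fourier reduction $R_n=nc_0+n\sum_{j\ge1}c_{jn}\cos(jn\theta)$ and the binomial-product formula for $c_{jn}$ are also correct.

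However, the steps you assert rather than prove are exactly the ones that carry the weight, and two of them do not go through as stated. First, ``all critical points of $R_n$ therefore lie at $\theta=\pi k/n$'' does not follow from the Fourier expansion, nor even from knowing that all $c_{jn}$, $j\ge1$, share the sign $(-1)^{m+1}$: a series such as $\cos u - 0.9\cos 2u$ has same-signed coefficients and interior critical points in $(0,\pi)$. Same-signedness does settle one extremum cleanly (if all $c_{jn}<0$ then $c_{jn}\cos(jn\theta)\ge c_{jn}$ pins the minimum to the vertices), but at the midpoint $\theta=\pi/n$ the even-$j$ harmonics enter with $\cos(j\pi)=+1$, the ``wrong'' sign, so locating the other extremum there requires a genuine tail estimate or a pairing trick (e.g.\ comparing with the $2n$-gon sum $R_n(\theta)+R_n(\theta+\pi/n)$) -- the ``dominance'' you invoke is the theorem, not a remark. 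Second, the Jensen argument for $\lambda<0$ cannot close: since $\sum_{i}\cos(\theta-2\pi i/n)=0$, convexity of $t\mapsto(\alpha-\beta t)^{\lambda/2}$ only yields the $\theta$-independent bound $R_n\ge n\alpha^{\lambda/2}$, which is never attained, so it identifies no minimizer. Finally, the sign $\mathrm{sgn}(c_n)=(-1)^{m+1}$ is correct and checkable at $q=1$ via Vandermonde and the sign of $\Gamma(s-n+1)$, but for $q<1$ the sum $\sum_k\binom{\lambda/2}{k}\binom{\lambda/2}{k+jn}q^{2k}$ mixes terms of both signs (the terms with $k\le m$ alternate), so its constancy of sign also needs an argument. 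None of these gaps suggests the approach is wrong -- it is the standard one -- but as written the proposal reproduces the easy scaffolding and defers precisely the parts that \cite{RN} had to work for.
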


\begin{remark}

\emph{The term $h$ can be interpreted in the following way. Assume
that $\Gamma$ and the points $A_i$ belong to different
two-dimensional planes at distance $\sqrt{h}$. Let the extremal value of
$\sum_{i=1}^{n}|MA_i|^{\lambda}$ be attained at some point $M_0$.
Then $\sum_{i=1}^{n}|M_0'A_i|^{\lambda}$ is an extremal value of
$\sum_{i=1}^{n}|M'A_i|^{\lambda}$, where $M'$ is the projection of
$M$ in the plane containing $A_i$.}
\end{remark}

\begin{remark}
\emph{It can be proved that
$\sum_{i=1}^n|PA_i|^{2k}=\sum_{j=0}(R+r)^{k-2j}R^kr^k\binom{k}{2j}\binom{2j}{j}$
where $R$ and $r$ are the radii of $\Gamma$ and the circumscribed
circle around the polygon when $k\in\{2,4,\ldots,2n-2\}$. When
$R=r$ this sum transforms to $\binom{2k}{k}nr^{2m}$. The proof is
straightforward from the use of complex numbers and is irrelevant
to the current work, so we shall not present it here. }
\end{remark}

It has been proved that $\{2,4,\ldots,2(n-1)\}$ are the only
integers $j$ for which the sum $\sum_{i=1}^n |PA_i|^j$ is
independent of the position of $P$ on $\Gamma$. By Theorem
\ref{OT} it follows that these are the only real values of
$\lambda$ with this property.

As it turns out this is a characteristic property of the regular polygon.

\begin{theorem}
Given $n$ different points $A_1,A_2,\ldots,A_n$ in the plane  and
a circle $\Gamma$ such that $\sum_{i=1}^n |PA_i|^{2k}$ is
independent of the position of $P$ on $\Gamma$ for
$k\in\{1,2,\ldots,n-1\}$. Then these points are the vertices of a
regular polygon, inscribed in a circle concentric to $\Gamma$.
\end{theorem}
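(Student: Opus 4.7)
The plan is to convert the constancy of the sums into algebraic identities for power sums of the complex coordinates of the $A_i$. Place the origin at the center of $\Gamma$ and rescale so that $\Gamma$ is the unit circle; identify the plane with $\mathbb{C}$ and let $a_1,\ldots,a_n\in\mathbb{C}$ correspond to $A_1,\ldots,A_n$. For a point $p\in\Gamma$ we have $\bar p=1/p$, and the identity
\[
|p-a_i|^2 = (1-p\bar a_i)(1-\bar p a_i)
\]
holds. Raising to the $k$-th power and expanding both factors with the binomial theorem yields
\[
\sum_{i=1}^n|p-a_i|^{2k}=\sum_{j,l=0}^k(-1)^{j+l}\binom{k}{j}\binom{k}{l}\,\sigma_{l,j}\,p^{j-l},
\]
where $\sigma_{l,j}=\sum_{i=1}^n a_i^l\bar a_i^j$. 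This is a Laurent polynomial in $p$; the hypothesis that it is constant on the unit circle is equivalent to the vanishing of every coefficient of $p^m$ with $m\ne 0$.

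Collecting the terms with $j-l=m$, this becomes the triangular system
\[
\sum_{l=0}^{k-m}\binom{k}{l+m}\binom{k}{l}\sigma_{l,l+m}=0,\qquad 1\le m\le k\le n-1.
\]
I would then prove by induction on $j$ that $\sigma_{j,j+m}=0$ whenever $j\ge 0$, $m\ge 1$ and $j+m\le n-1$. The base case $j=0$ is the $l=0$ (and only) term of the equation with $k=m$. For the inductive step, take $k=j+m$: all terms with $l<j$ vanish by the inductive hypothesis applied to $(l,\,l+m)$ in place of $(j,\,j+m)$, leaving the $l=j$ term, which forces $\sigma_{j,j+m}=0$ since the binomial coefficients are nonzero.

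Specializing to $j=0$ and taking complex conjugates gives $\sum_{i=1}^n a_i^k=0$ for $k=1,\ldots,n-1$. By Newton's identities the elementary symmetric polynomials $e_1(a),\ldots,e_{n-1}(a)$ all vanish, so $a_1,\ldots,a_n$ are the roots of a polynomial of the form $z^n-c$. Distinctness of the $A_i$ forces $c\ne 0$, and the roots of $z^n=c$ are precisely the vertices of a regular $n$-gon inscribed in the circle of radius $|c|^{1/n}$ centered at the origin, which is concentric with $\Gamma$, as required.

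The only real obstacle is the bookkeeping in the inductive step: one must check that every $\sigma_{l,l+m}$ invoked actually falls under a previously established case, i.e., satisfies $l+m\le n-1$. This is transparent from $l\le j-1$ together with $j+m\le n-1$, so the induction carries through cleanly, and the complex-analytic machinery does all the real work.
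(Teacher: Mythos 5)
Your argument is correct and follows essentially the same route as the paper: pass to complex coordinates on the unit circle, observe that the resulting Laurent polynomial in $p$ must have all nonconstant coefficients vanish, extract $\sum_i a_i^k=0$ for $k=1,\ldots,n-1$, and finish with Newton's identities. Your triangular induction establishing that all mixed sums $\sigma_{j,j+m}$ vanish is more than is needed --- the base case $j=0$ (which is exactly the extreme-coefficient extraction the paper performs) already yields the required power sums --- but it is sound.
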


\begin{proof}
We consider the problem in the complex plane and we assume
$\Gamma$ to be the unit circle. We assign complex numbers $a_1,\ldots a_n$ to $A_1,\ldots, A_n$ respectively.
Using the complex polynomial method we obtain
$$\sum_{i=1}^n|x-a_i|^{2k}=\sum_{i=1}^n(x-a_i)^k(\overline{x-a_i})^{k}=\sum_{i=1}^n(x-a_i)^k(\frac{1}{x}-\overline{a_i})^k=c.$$
After multiplying out we obtain
$$(x-a_i)^k(\frac{1}{x}-\overline{a_i})^k=\sum_{j=-k}^k c_{ij} x^j= P_i(x).$$
Now we have
$$\sum_{i=1}^n|x-a_i|^{2k}=\sum_{i=1}^nP_i(x)=\sum_{j=-k}^k \big(\sum_{i=1}^nc_{ij}\big)x^j=c$$
and after multiplying by $x^k$ we get:
$$\sum_{j=-k}^k \big(\sum_{i=1}^nc_{ij}\big)x^{j+k}-cx^k=0.$$
This polynomial has infinitely many zeros (all $x$ with $|x|=1$)
and so it is identically zero. In particular, we have that
$\big(\sum_{i=1}^nc_{ik}\big)=\sum_{i=1}^n\overline{a_i}^k=0$. It
follows that $\sum_{i=1}^na_i^k=0$ for all $k\in\{1,\ldots,n-1\}$.
Using Newton's identities this implies the desired result.
\end{proof}

We may pose the following

\begin{conjecture}\label{open1}
Given $n$ different points $A_1,A_2,\ldots,A_n$ in the plane  and
a circle $\Gamma$ such that $\sum_{i=1}^n |PA_i|^{2n-2}$ is a
constant function of $P\in\Gamma$. Then these points are the
vertices of a regular polygon, inscribed in a circle concentric to
$\Gamma$.
\end{conjecture}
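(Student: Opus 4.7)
The plan is to mimic the complex-polynomial method of the preceding theorem, specialized to the single-power hypothesis $\lambda = 2(n-1)$. I would place $\Gamma$ as the unit circle in $\mathbb{C}$ and identify each $A_i$ with $a_i \in \mathbb{C}$. The hypothesis $\sum_{i=1}^n |PA_i|^{2(n-1)} \equiv c$ on $\Gamma$, after the substitution $\bar z = 1/z$ and clearing denominators, becomes the polynomial identity
$$Q(z) := \sum_{i=1}^n (z - a_i)^{n-1}(1 - \overline{a_i}\, z)^{n-1} \equiv c\, z^{n-1},$$
valid on all of $\mathbb{C}$ because the difference has degree at most $2(n-1)$ and infinitely many zeros on the unit circle.

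Matching the coefficient of $z^k$ on both sides for each $k \in \{0,\ldots,2(n-1)\}\setminus\{n-1\}$ reduces the hypothesis to the system
$$\sum_{i=1}^n \overline{a_i}^{\,m}\, P_m(|a_i|^2) = 0, \qquad m = 1, \ldots, n-1,$$
where $P_m(t) := \sum_{j=0}^{n-1-m}\binom{n-1}{m+j}\binom{n-1}{j}\, t^j$ has non-negative coefficients and positive constant term $\binom{n-1}{m}$, so that $P_m(t) > 0$ for every $t \ge 0$.

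The plan then splits into two stages. Stage one, and the genuine obstacle, is to show $|a_1| = \cdots = |a_n|$. Once established, writing $r$ for the common modulus, the system collapses to $P_m(r^2)\sum_i \overline{a_i}^m = 0$, and the positivity $P_m(r^2) > 0$ forces $\sum_i a_i^m = 0$ for all $m = 1, \ldots, n-1$. Newton's identities then give $\prod_i(x - a_i) = x^n - e_n$, so the $a_i$ are the $n$-th roots of $e_n$ --- precisely the vertices of a regular $n$-gon inscribed in a circle of radius $r$ centered at the origin, which is concentric to $\Gamma$, as required.

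The hard part is stage one. My first attempt would be to substitute $z = a_j$ into $Q(z) = c z^{n-1}$, obtaining
$$\sum_{i \neq j} \bigl[(a_j - a_i)(1 - \overline{a_i} a_j)\bigr]^{n-1} = c\, a_j^{n-1}$$
for every $j$, with the conjugate identity coming from $z = 1/\overline{a_j}$; the Blaschke-type factors $(a_j - a_i)(1 - \overline{a_i} a_j)$ admit a clean geometric description, and I would try to exploit this by picking the index $j_0$ maximizing $|a_{j_0}|$ and applying a triangle-type estimate. A complementary strategy is induction on $n$: the equation with $m = n-1$ already reads $\sum_i \overline{a_i}^{n-1} = 0$, and one may hope to eliminate one of the $a_i$ and reduce to a smaller instance. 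The reason these approaches are delicate --- and presumably the reason the statement is only conjectural --- is that every equation in the reduced system couples a power of $\overline{a_i}$ with a genuinely nonlinear factor $P_m(|a_i|^2)$, obstructing any direct application of Newton's identities without first establishing the equality of moduli.
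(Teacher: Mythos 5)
Your reduction is correct as far as it goes, but it does not prove the statement, and indeed it cannot be checked against a proof in the paper because the paper offers none: this is stated as a conjecture, confirmed there only for $n=2$ (trivially) and $n=3$. Your setup --- passing to the identity $\sum_i (z-a_i)^{n-1}(1-\overline{a_i}z)^{n-1}\equiv cz^{n-1}$ and extracting the coefficient system $\sum_i \overline{a_i}^{\,m}P_m(|a_i|^2)=0$ for $m=1,\dots,n-1$ --- is exactly the paper's method (for $n=3$ this is literally the pair $a_1^2+a_2^2+a_3^2=0$ and $\sum_i(1+|a_i|^2)a_i=0$ appearing in their proof, up to normalization), and your stage two is sound: once the moduli are equal, the strict positivity of $P_m$ on $[0,\infty)$ kills the weights, the power sums $\sum_i a_i^m$ vanish for $m\le n-1$, and Newton's identities give $\prod_i(x-a_i)=x^n-e_n$. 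This part is essentially the paper's Proposition on points lying on a circle concentric to $\Gamma$.

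The genuine gap is your stage one, and you have correctly identified it: nothing in the proposal establishes $|a_1|=\cdots=|a_n|$ for $n\ge 4$. The two strategies you sketch (evaluating at $z=a_j$ and estimating the Blaschke-type factors at a modulus-maximizing index; induction on $n$ starting from $\sum_i\overline{a_i}^{\,n-1}=0$) are offered as hopes, not arguments, and neither is carried out. For comparison, the paper closes this gap only in the case $n=3$, by an explicit computation: solving the resulting quadratic for $a_1/a_2$, computing $|a_1/a_2|$ and $|a_3/a_2|$ in terms of $p=1+|a_1|^2$, $q=1+|a_2|^2$, $r=1+|a_3|^2$, and forcing $p=q=r$ from the resulting rational identities. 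That computation does not visibly generalize, which is precisely why the general statement remains open. So your proposal should be read as a correct reformulation of the conjecture, not a proof of it; the single unproved claim $|a_1|=\cdots=|a_n|$ carries the entire difficulty.
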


For $n=2$ this conjecture is trivial. To conform it for $n=3,$ we
shall need the following proposition (which is also of independent
interest).

\begin{proposition}\label{n2}
Let $A_1,\ldots, A_n$ be points in the plane, which belong to a circle $T$. Assume that $\Gamma$
is a circle, concentric to $T$, such that $\sum_{i=1}^n|XA_i|^{2k}$, where $k>\Big[\frac{n}{2}\Big]$,
is independent of the position of $X$ on $\Gamma$, then $A_1,\ldots, A_n$ are the vertices of a regular $n-gon$.
\end{proposition}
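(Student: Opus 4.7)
The plan is to carry over the complex-polynomial argument of the preceding theorem to two concentric circles of possibly different radii. Place the common center at the origin and write $r,R$ for the radii of $T,\Gamma$; identifying $A_i$ with $a_i\in\mathbb C$ we have $|a_i|=r$, $|x|=R$, and hence $\bar x=R^2/x$, $\bar a_i=r^2/a_i$ on $\Gamma$. Expanding
$$|x-a_i|^{2k}=(x-a_i)^k\Bigl(\tfrac{R^2}{x}-\tfrac{r^2}{a_i}\Bigr)^k$$
via the binomial theorem exhibits $|x-a_i|^{2k}$ as a Laurent polynomial in $x$ of degrees between $-k$ and $k$, and the coefficient of $x^m$ is of the form $C_m\,a_i^{-m}$ where $C_m$ depends only on $k,r,R,m$ (not on $i$).

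A Laurent polynomial of bounded degrees that is constant on the circle $|x|=R$ must be identically constant (multiply by $x^k$ and invoke the fundamental theorem of algebra), so summing over $i$ forces $C_m\sum_{i=1}^{n}a_i^{-m}=0$ for every $m$ with $0<|m|\le k$. A direct inspection shows that, after factoring the overall sign $(-1)^m$, every summand in $C_m$ is a positive product of binomial coefficients and powers of $r,R$, so $C_m\ne 0$. Applying the resulting relation with $m$ and $-m$ in turn gives $\sum_i a_i^{m}=0$ for $m=1,\ldots,k$, and Newton's identities then yield $e_1(a)=\cdots=e_k(a)=0$.

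At this point the hypothesis $A_i\in T$ enters decisively through the reciprocal identity
$$\overline{e_j(a)}=e_j\!\left(\frac{r^2}{a}\right)=\frac{r^{2j}\,e_{n-j}(a)}{e_n(a)},$$
which is immediate from $\bar a_i=r^2/a_i$ and makes sense because $e_n(a)\ne 0$. Consequently $e_j=0\Leftrightarrow e_{n-j}=0$, so from $e_1=\cdots=e_k=0$ one also obtains $e_{n-k}=\cdots=e_{n-1}=0$. The hypothesis $k>[n/2]$ is precisely what is required for $\{1,\ldots,k\}\cup\{n-k,\ldots,n-1\}$ to cover $\{1,\ldots,n-1\}$, so every elementary symmetric function of $a_1,\ldots,a_n$ except $e_n$ vanishes. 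Hence $\prod_i(z-a_i)=z^n+(-1)^n e_n$, and the $a_i$ are the $n$-th roots of a single complex number of modulus $r^n$, i.e.\ vertices of a regular $n$-gon inscribed in $T$. The only real computation is the non-vanishing of $C_m$; it is the palindromic symmetry of the coefficients of $\prod_i(z-a_i)$ enforced by $|a_i|=r$ that lets us use the weak hypothesis $k>[n/2]$ instead of the naive bound $k\ge n-1$ that a direct application of Newton's identities would demand.
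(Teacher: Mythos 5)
Your proof is correct and follows essentially the same route as the paper's: expand $|x-a_i|^{2k}$ as a Laurent polynomial in $x$ using $\bar x=R^2/x$ and $\bar a_i=r^2/a_i$, deduce that the power sums $p_1,\dots,p_k$ vanish, apply Newton's identities to get $e_1=\cdots=e_k=0$, and then use the reciprocal symmetry $e_j=0\Leftrightarrow e_{n-j}=0$ forced by $|a_i|=r$ together with $k>[n/2]$ to kill all of $e_1,\dots,e_{n-1}$. Your explicit check that the Laurent coefficients $C_m$ are nonzero (via the uniform sign $(-1)^m$) carefully fills in a step the paper dispatches with ``using the same approach as before,'' but it is the same argument.
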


\begin{proof}
We assign complex numbers $a_i$ to the points $A_i$ and $x$ to
$X$. After a dilatation and a rotation we may assume that $T$ is
the unit circle and that $\prod_{i=1}^na_i=1$. We now have
$$\sum_{i=1}^n|x-a_i|^{2k}=\sum_{i=1}^n(x-a_i)^k(\overline{x}-\overline{a_i})^k=const.$$
Using the same approach as before and $|a_i|=|a_j|,
i,j\in\{1,\ldots,n\}$ we obtain that $\sum_{i=1}^na_i^{t}=0$ for
$t=1,\ldots, k$. If $k\geq n-1$ we easily obtain that $A_1,\ldots,
A_n$ are the vertices of a regular $n$-gon, but from here it
follows that $\sum_{i=1}^na_i^{t}\neq0$, when $t$ is a multiple of
$n$ and hence $k=n-1$. Now if $\Big[\frac{n}{2}\Big]\leq k<n-1$ we
have by Newton's identities that $e_i(a_1,\ldots, a_n)=0$ for
$i=1,\ldots, k$. Now from $|a_i|=1$ and $\prod_{i=1}^na_i=1$ we
have that if $A\subset\{1,\ldots n\}$, then
$$\overline{\prod_{i\in A}a_i}=\prod_{i\in A}\overline{a_i}=\prod_{i\in A}\frac{1}{a_i}=
\frac{1}{\prod_{i\in A}a_i}=\prod_{i\notin A}a_i.$$ Thus we have
$\overline{e_i(a_1,\ldots, a_n)}=e_{n-i}$ and hence
$e_i(a_1,\ldots, a_n)=0$ for $i=1,\ldots, n-1$ as
$\Big[\frac{n}{2}\Big]\leq k$. The conclusion easily follows.
\end{proof}

\begin{proposition} Let $A_1$, $A_2$ and $A_3$ be three different points in the plane.
Assume that there exists a circle $\Gamma$ such that
$|XA_1|^4+|XA_2|^4+|XA_3|^4$ is independent of the position of $X$
on $\Gamma$. Then $A_1$, $A_2$ and $A_3$ are the vertices of an
equilateral triangle.
\end{proposition}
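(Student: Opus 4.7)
The plan is to imitate the complex-polynomial technique used in the proof of Proposition \ref{n2}, but without presupposing that $A_1,A_2,A_3$ lie on a common circle, and then deduce from the resulting algebraic identities that the three points are in fact equidistant from the centre of $\Gamma$; once that is established, Proposition \ref{n2} (applied with $n=3$ and $k=2>[3/2]$) finishes the proof. After translating and scaling so that $\Gamma$ is the unit circle centred at the origin and writing $a_i\in\mathbb C$ for the affix of $A_i$, I would use $\bar x=1/x$ on $|x|=1$ to rewrite the hypothesis as the identity $\sum_{i=1}^3 (x-a_i)^2(1/x-\bar a_i)^2=C$ on the unit circle, multiply by $x^2$, and match coefficients in the resulting polynomial of degree $4$. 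This produces (after absorbing complex conjugate relations) the two equations
\[
\text{(I)}\quad\sum_{i=1}^3 a_i^2=0,\qquad \text{(II)}\quad\sum_{i=1}^3(1+|a_i|^2)\,a_i=0.
\]
Note that if all $|a_i|$ happen to be equal, then (II) immediately gives $\sum a_i=0$, which combined with (I) is precisely the complex criterion for an equilateral triangle ($a_1,a_2,a_3$ are the three roots of $z^3-c=0$ for some $c$); so the content is entirely in showing $|a_1|=|a_2|=|a_3|$.

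To establish the equality of radii, I would solve (II) for $a_3=-(\alpha_1a_1+\alpha_2a_2)/\alpha_3$ with $\alpha_j=1+|a_j|^2$, substitute into (I), and write $a_j=r_je^{i\theta_j}$. After dividing by $e^{i(\theta_1+\theta_2)}$ and separating real and imaginary parts, one gets the imaginary-part condition
\[
\bigl[(\alpha_3^2+\alpha_1^2)r_1^2-(\alpha_3^2+\alpha_2^2)r_2^2\bigr]\sin(\theta_1-\theta_2)=0
\]
together with a real-part equation. The branch $\sin(\theta_1-\theta_2)=0$ can be excluded using the real part: for $\cos(\theta_1-\theta_2)=1$ the equation becomes a sum of nonnegative terms equalling zero, while for $\cos(\theta_1-\theta_2)=-1$ it rearranges to $\alpha_3^2(r_1^2+r_2^2)+(\alpha_1r_1-\alpha_2r_2)^2=0$; either way one is forced to $r_1=r_2=0$, contradicting distinctness. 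Hence the bracketed factor vanishes, and since $r\mapsto r(1+r^2)$ is strictly increasing on $[0,\infty)$, a sign analysis of $\alpha_3^2(r_1^2-r_2^2)=(\alpha_2r_2-\alpha_1r_1)(\alpha_2r_2+\alpha_1r_1)$ forces $r_1=r_2$. Running the same argument after eliminating $a_1$ (respectively $a_2$) instead of $a_3$ yields $r_2=r_3$, so all three radii agree.

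I expect this radii-equating step to be the main obstacle: relations (I) and (II) couple the affixes with their moduli rather inhomogeneously, and the sign bookkeeping in the ``imaginary-part vanishes'' branch requires some care, particularly to rule out the degenerate cases $r_j=0$ and to handle both signs of $\cos(\theta_1-\theta_2)$ cleanly. Once $r_1=r_2=r_3$ has been established, the conclusion is immediate from Proposition \ref{n2}, or, equivalently, from the direct observation that (I) together with $\sum a_i=0$ forces $a_1,a_2,a_3$ to be the roots of $z^3-c$ for some constant $c$, hence the vertices of an equilateral triangle.
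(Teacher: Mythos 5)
Your proposal is correct and follows essentially the same route as the paper: both normalize $\Gamma$ to the unit circle, extract the identities $\sum_i a_i^2=0$ and $\sum_i(1+|a_i|^2)a_i=0$ by matching coefficients of the polynomial identity on $|x|=1$, reduce everything to showing $|a_1|=|a_2|=|a_3|$, and conclude via Proposition \ref{n2}. The only divergence is in that middle step: with $p=1+|a_1|^2$, $q=1+|a_2|^2$, $r=1+|a_3|^2$, the paper solves the quadratic $(r^2+p^2)(a_1/a_2)^2+2pq(a_1/a_2)+(r^2+q^2)=0$ explicitly and compares moduli to force $p=q=r$, whereas you split the same relation into real and imaginary parts in polar coordinates and invoke the monotonicity of $t\mapsto t(1+t^2)$ --- a correct and, if anything, slightly cleaner variant that also absorbs the paper's separate treatment of the case $a_i=0$.
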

\begin{proof}
We shall use complex numbers. After a dilatation we can consider
$\Gamma$ to be the unit circle. We assign complex numbers $a_i$ to
the points $A_i$ and $x$ to $X$. Assume that some of $a_i$ is
zero, say $a_1=0$. This is equivalent to $|XA_2|^4+|XA_3|^4$ is
independent of the position of $X$ on $\Gamma$, but this is not an
equation of a circle, unless $A_1\equiv A_2\equiv A_3$, which is
not the case. Now we have that
$$\sum_{i=1}^3|XA_i|^4=\sum_{i=1}^3(x-a_1)^2(\overline{x}-\overline{a_1})^2=c.$$

Then
$x^2\sum_{i=1}^3(x-a_1)^2(\overline{x}-\overline{a_1})^2-x^2c=0$.
As this polynomial in $x$ is zero for all $|x|=1$ we have that it
is identically zero. Hence $a_1^2+a_2^2+a_3^2=0$ and
$(1+|a_1|^2)a_1+(1+|a_2|^2)a_2+(1+|a_3|^2)a_3=0$. Put
$1+|a_1|^2=p, 1+|a_2|^2=q, 1+|a_3|^2=r$. Thus we have that
$pa_1+qa_2+ra_3=0$ and then
$$a_1^2+a_2^2+\Big(\frac{pa_1+qa_2}{r}\Big)^2=0.$$ This is
equivalent to $a_1^2(r^2+p^2)+a_2^2(r^2+q^2)+2pqa_1b_2=0$. After
dividing by $a_2^2\neq0$ we obtain a quadratic equation in
$\frac{a_1}{a_2}$ with solutions
$$\frac{a_1}{a_2}=\frac{-pq\pm ir\sqrt{p^2+q^2+r^2}}{r^2+p^2}.$$
Analogously we obtain
$$\frac{a_3}{a_2}=\frac{-qr\pm ip\sqrt{p^2+q^2+r^2}}{r^2+p^2}.$$
Now $$\Big|\frac{a_1}{a_2}\Big|=\frac{p^2q^2+r^2(p^2+q^2+r^2)}{(r^2+p^2)^2}=\frac{q^2+r^2}{p^2+r^2}$$
and
$$\Big|\frac{a_3}{a_2}\Big|=\frac{r^2q^2+p^2(p^2+q^2+r^2)}{(r^2+p^2)^2}=\frac{q^2+p^2}{p^2+r^2}.$$
We have
$$\frac{p-1}{q-1}=\frac{q^2+r^2}{p^2+r^2},\quad \frac{r-1}{q-1}=\frac{q^2+p^2}{p^2+r^2},$$
It follows that
$$p-r=(q-1)\frac{r^2-p^2}{p^2+r^2}\Rightarrow(p-r)\Big(1+(q-1)\frac{p+r}{p^2+r^2}\Big)=0,$$
and thus $p=r$ as $p,q,r>1$.

In the same manner we obtain $p=q=r$ and thus $|a_1|=|a_2|=|a_3|$. Now the result follows from Proposition \ref{n2}.
\end{proof}

We may also confirm Conjecture \ref{open1} if the circle varies.

\begin{proposition}
Given $n$ different points $A_1,A_2,\ldots,A_n$ in the plane  and
a circle $\Gamma$ such that $\sum_{i=1}^n |PA_i|^{2n-2}$ is a
constant function of $P$ when $P$ belongs to an arbitrary
circumference $\Gamma_r$ with radius $r$, concentric to $\Gamma$.
Then these points are the vertices of a regular polygon, inscribed
in a circle concentric to $\Gamma$.
\end{proposition}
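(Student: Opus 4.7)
The plan is to imitate the complex-polynomial argument already used for the single-circle theorem earlier in this section, but exploit the freedom to vary the radius $r$ in place of the freedom to vary the exponent. Place $\Gamma$ at the origin and identify $A_i$ with complex numbers $a_i$. For $P\in\Gamma_r$ set $x:=P$, so $\bar x=r^2/x$ and
$$|x-a_i|^{2n-2}=x^{-(n-1)}\bigl[(x-a_i)(r^2-x\bar a_i)\bigr]^{n-1}.$$
Multiplying the hypothesis $\sum_i|x-a_i|^{2n-2}=C(r)$ by $x^{n-1}$ produces
$$F_r(x):=\sum_{i=1}^n(x-a_i)^{n-1}(r^2-x\bar a_i)^{n-1}-C(r)\,x^{n-1},$$
a polynomial of degree at most $2n-2$ in $x$ which vanishes on the infinite set $|x|=r$; hence $F_r\equiv 0$ identically in $x$.

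The next step is to compare coefficients of $x^j$ for $j\ne n-1$. Since $C(r)$ contributes only to the coefficient of $x^{n-1}$, each such comparison yields a clean identity, which after a binomial expansion is a polynomial in $r^2$ whose coefficients involve the mixed power sums $S_{p,q}:=\sum_i a_i^p\bar a_i^q$. Since the polynomial must vanish for every $r>0$, each monomial in $r$ vanishes separately. In particular, picking off the highest power of $r$ (corresponding to $l=0$ in the expansion) for $j=0,1,\dots,n-2$ gives the clean relations
$$\sum_{i=1}^n a_i^k=0\qquad(k=1,2,\dots,n-1).$$
Newton's identities then force $e_1=\dots=e_{n-1}=0$, so $\prod_i(z-a_i)=z^n+(-1)^n e_n$, and the $a_i$ are the $n$-th roots of a fixed complex number, forming a regular $n$-gon centered at the origin and hence inscribed in a circle concentric with $\Gamma$.

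The only delicate point is that the unknown function $C(r)$ might seem to absorb information and so undermine the extraction of $n-1$ independent power-sum relations. It does not, because it affects only a single coefficient (that of $x^{n-1}$); every other coefficient of $F_r$ yields a genuine polynomial identity in $r$, and the binomial weights $\binom{n-1}{j}\binom{n-1}{0}$ accompanying the $l=0$ terms are nonzero for every $j\in\{0,1,\dots,n-2\}$, so the $n-1$ relations we need really are implied. The whole argument is purely algebraic; no case analysis, estimates, or use of the other $S_{p,q}=0$ relations obtained for free is required.
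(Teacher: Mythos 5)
Your argument is correct and is essentially the paper's own proof: the same complex-polynomial identity $F_r\equiv 0$, the same extraction of the leading power of $r$ from the coefficients of the non-middle powers of $x$ (you read off the low-degree coefficients to get $\sum_i a_i^k=0$ directly, while the paper reads off the high-degree ones to get the conjugate relations $\sum_i\overline{a_i}^k=0$), and the same appeal to Newton's identities. Your explicit remark that the unknown constant $C(r)$ contaminates only the coefficient of $x^{n-1}$ is a point the paper leaves implicit, and your use of $r^2$ in $\bar x=r^2/x$ corrects a harmless slip in the paper's formula.
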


\begin{proof}
We consider the problem in the complex plane and we assume
$\Gamma$ to be the unit circle. We assign complex numbers
$a_1,\ldots, a_n$ to $A_1,\ldots, A_n$ respectively. Again using
the complex polynomial method we obtain
$$\sum_{i=1}^n|x-a_i|^{2n-2}=\sum_{i=1}^n(x-a_i)^{n-1}(\overline{x-a_i})^{n-1}=\sum_{i=1}^n(x-a_i)^{n-1}(\frac{r}{x}-\overline{a_i})^{n-1}=c.$$
After multiplying out we obtain
$$(x-a_i)^{n-1}(\frac{r}{x}-\overline{a_i})^{n-1}=\sum_{j=-n+1}^{n-1} P_{ij}(r) x^j,$$
where $P_{ij}(r)$ is some polynomial in $r$. Now we have
$$\sum_{i=1}^n|x-a_i|^{2n-2}=\sum_{j=-n+1}^{n-1} \sum_{i=1}^{n-1}P_{ij}(r) x^j=c$$
and after multiplying by $x^{n-1}$ we get:
\begin{equation}\label{comnumbstep}\sum_{j=-n+1}^{n-1} \sum_{i=1}^{n-1}P_{ij}(r) x^{j+n-1}-cx^{n-1}=0\end{equation}
We fix $r$ and consider this as a polynomial in $x$. It has
infinitely many zeros (all $x$ with $|x|=1$) and so it is
identically zero. In particularly we have that
$\sum_{i=1}^{n-1}P_{ij}(r)=0$. This holds for all $r>0$, so this
polynomial in $r$ has to be identically zero. It is easy to see
that for $j>0$ the leading term of $P_{ij}$ equals
$r^{n-1-j}\overline{a_i}^j$ and hence the leading term in
$\sum_{i=1}^{n-1}P_{ij}(r)$ equals
$\sum_{i=1}^{n-1}\overline{a_i}^jr^{n-1-j}$. From here it follows
that $\sum_{i=1}^{n-1}\overline{a_i}^j=0$ for $j=1,2\ldots, n-2$.
It also remains to note that the leading coefficient of
\eqref{comnumbstep} as a polynomial in $x$ is
$\sum_{i=1}^{n-1}\overline{a_i}^{n-1}x^{n-1}$, hence this sum is
zero and as before the proof is complete.
\end{proof}

\section{Lemmas}
Before we continue with higher dimensional analogs of the
considered problems we need two auxiliary results.

The following lemma is stated and proved in \cite{RN}, but nevertheless we include it here to keep the article self-contained.

\begin{lemma} \label{OL} Let $a_1, a_2,\ldots, a_n$  and $b_1, b_2,\ldots, b_n$ be real numbers and $b_i, i=1,\ldots, n$
be nonnegative, then the function
$$\Theta(\lambda)=\sum_{i=1}^na_ib_i^{\lambda}$$ is either
identically zero or has at most $n-1$ real solutions for $\lambda$
counted with their multiplicities.
\end{lemma}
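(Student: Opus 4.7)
The plan is to prove the lemma by induction on $n$, using Rolle's theorem to pass from $\Theta$ to its derivative and then re-express the derivative as a sum of the same form with one fewer term.

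First I would make some easy reductions. If some $b_i=0$, the term $a_ib_i^\lambda$ either vanishes identically (when $a_i=0$) or is only defined for $\lambda>0$; in either case we may discard such terms and obtain a sum with strictly fewer terms, whose zero set on $\{\lambda>0\}$ is no larger. Similarly, any term with $a_i=0$ can be dropped. If two indices have $b_i=b_j$, combine them into a single term $(a_i+a_j)b_i^\lambda$. After these reductions we are looking at a sum $\widetilde{\Theta}(\lambda)=\sum_{i=1}^{m}\alpha_i\beta_i^\lambda$ with $m\le n$, all $\alpha_i\ne 0$, and all $\beta_i$ pairwise distinct positive reals. It suffices to show such a sum has at most $m-1$ real zeros (counted with multiplicity) unless it is identically zero.

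The base case $m=1$ is immediate because $\alpha_1\beta_1^\lambda$ never vanishes. For the inductive step, I would divide by $\beta_m^\lambda>0$ to form
\[
\Psi(\lambda)=\frac{\widetilde{\Theta}(\lambda)}{\beta_m^\lambda}=\alpha_m+\sum_{i=1}^{m-1}\alpha_i\Big(\frac{\beta_i}{\beta_m}\Big)^\lambda,
\]
which has exactly the same real zeros as $\widetilde{\Theta}$ and the same multiplicities. Differentiating,
\[
\Psi'(\lambda)=\sum_{i=1}^{m-1}\alpha_i\ln\!\Big(\frac{\beta_i}{\beta_m}\Big)\Big(\frac{\beta_i}{\beta_m}\Big)^\lambda,
\]
which is again a sum of the same type, now with $m-1$ terms; note the coefficients $\alpha_i\ln(\beta_i/\beta_m)$ are nonzero because $\beta_i\ne\beta_m$. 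By the induction hypothesis, $\Psi'$ is either identically zero or has at most $m-2$ real zeros with multiplicity. In the first case $\Psi$ is a nonzero constant (since $\alpha_m\ne 0$ rules out the zero constant unless $m=1$), hence has no zeros at all, so $\widetilde{\Theta}\equiv\alpha_m\beta_m^\lambda$ has no zeros. In the second case, Rolle's theorem (applied between consecutive zeros of $\Psi$ and at multiple zeros) gives that if $\Psi$ has $k$ real zeros counted with multiplicity, then $\Psi'$ has at least $k-1$; therefore $k-1\le m-2$, i.e.\ $k\le m-1\le n-1$.

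The only subtlety I expect is the careful bookkeeping of multiplicities in the Rolle step: one must verify that a zero of $\Psi$ of order $r$ forces a zero of $\Psi'$ of order $r-1$, and that strictly between two (simple or multiple) zeros of $\Psi$ there is at least one additional zero of $\Psi'$. Both facts are standard for $C^\infty$ functions, which $\Psi$ certainly is on $\mathbb{R}$, so the argument goes through cleanly. This gives the bound $m-1\le n-1$ on the number of real zeros, completing the induction.
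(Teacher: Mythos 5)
Your proof is correct and follows essentially the same route as the paper's: induction on the number of terms, normalizing by dividing out one exponential, differentiating to get a sum of the same form with one fewer term, and using Rolle's theorem (with multiplicities) to compare zero counts. The preliminary reductions you add (discarding zero coefficients and merging equal bases) are a sensible tidying of the same argument rather than a different method.
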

\begin{proof}
We proceed by induction on the number of summands. For $n=1$ we
have that $ab^{\lambda}=0$, which does not have solutions if both
of $a$ and $b$ are nonzero. If either of them is zero then
$ab^{\lambda}$ is identically zero. Now assume the statement to be
true for all $k<n$.

For $k=n$ if either of $a_i$ or $b_i$ is zero then we use the
induction hypothesis. Now let $b_i, a_i$ be nonzero. As all of
$b_i$ are nonzero then we can divide each term by $b_1^{\lambda}$
to get $$\sum_{i=1}^na_i\Big(\frac{b_i}{b_1}\Big)^{\lambda}=0.$$
Assume that this equation is not identically zero and its
solutions are $y_1,\ldots,y_k$ with multiplicities $t_1,\ldots,
t_k$ and $\sum_{i=1}^k t_i>n-1$. Differentiating this with respect
to $\lambda$ we get
$$\sum_{i=2}^na_i\ln\Big(\frac{b_i}{b_1}\Big)\Big(\frac{b_i}{b_1}\Big)^{\lambda}=0=\sum_{i=2}^n a_i^{'} b_i^{'\lambda},$$
where $a_i'=a_i\ln\Big(\frac{b_i}{b_1}\Big)$ and
$b_i'=\frac{b_i}{b_1}$. Assume that this expression is identically
zero, then $\sum_{i=1}^na_ib_i^{\lambda}=0$ must be a constant,
and the claim follows. Assume that the derivative does not vanish
for all $\lambda$. Now by the induction hypothesis the derivative
has at most $n-2$ zeros. But we have that $y_1,\ldots,y_k$ are
solutions to the above equation with multiplicities $t_1-1,\ldots,
t_k-1.$ Moreover, by Rolle's theorem the derivative has at least
one root in each interval $(y_i;y_{i+1})$, and thus we obtain
$k-1+\sum_{i=1}^k t_i-1$ solutions (counted with their
multiplicities), which is greater than $n-2$- a contradiction. It
follows that $\sum_{i=1}^k t_i\leq n-1$. The lemma is proved.
\end{proof}

\begin{lemma}\label{hcon}Let $\pi$ and $\xi$ be two nonparallel hyperplanes
in $\mathbb{R}^n$, $n>2$. For every two points $\mathbf{x}$ and
$\mathbf{y}$  belonging to the sphere $S^{n-1}$ there exists a
sequence $S_1,\ldots, S_k$ of two-codimensional spheres such that:

\begin{enumerate}
\item $\mathbf{x}\in S_1$ and $\mathbf{y}\in S_k$,
\item $S_i\bigcap S_{i+1}\neq\emptyset$,
\item $S_i=S^{n-1}\bigcap\alpha$, where $\alpha$ is a hyperplane parallel either to $\pi$ or to $\xi$.
\end{enumerate}
\end{lemma}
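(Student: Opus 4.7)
The plan is to project everything onto the two-dimensional plane spanned by the normals of $\pi$ and $\xi$ and reduce the assertion to a connectivity statement about a planar convex region. Let $\mathbf{n}_\pi$ and $\mathbf{n}_\xi$ denote unit normal vectors of $\pi$ and $\xi$, which are linearly independent by the non-parallelism hypothesis, and consider the linear map
\begin{equation*}
F:\mathbb{R}^n\to\mathbb{R}^2,\qquad F(\mathbf{p})=\bigl(\langle\mathbf{p},\mathbf{n}_\pi\rangle,\langle\mathbf{p},\mathbf{n}_\xi\rangle\bigr).
\end{equation*}
A hyperplane parallel to $\pi$ (respectively $\xi$) is the preimage of a constant under the first (respectively second) coordinate of $F$, so its intersection with $S^{n-1}$ corresponds under $F$ to a vertical (respectively horizontal) chord of $E:=F(S^{n-1})$. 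Because $n>2$, every fibre of $F$ over a point of the filled ellipse $F(\overline{B^n})$ is an affine codimension-two subspace of $\mathbb{R}^n$ which meets the open unit ball and therefore also the sphere; hence $E$ coincides with the full filled ellipse and is a convex planar region with nonempty interior. Two spheres of the allowed form intersect on $S^{n-1}$ if and only if the corresponding axis-parallel chords meet inside $E$.

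With this dictionary the lemma reduces to the following planar statement: any two points $\mathbf{p}_0=F(\mathbf{x})$ and $\mathbf{p}_1=F(\mathbf{y})$ of $E$ can be joined in $E$ by a polygonal path whose edges are alternately vertical and horizontal. I would prove this directly. By convexity the straight segment $[\mathbf{p}_0,\mathbf{p}_1]$ lies in $E$, and since the ellipse is strictly convex, every point in the relative interior of this segment is an interior point of $E$. Around each such interior point one can fit a small open axis-aligned rectangle contained in $E$; by compactness, finitely many of these rectangles cover the segment, and hopping from one rectangle to the next through their overlaps by $L$-shaped moves produces the required staircase. Pulling this finite staircase back through $F^{-1}$ yields spheres $S_1,\ldots,S_k$ with consecutive intersections nonempty.

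The main technical nuisance is the handling of degenerate endpoints. If $\mathbf{x}=\pm\mathbf{n}_\pi$, then $F(\mathbf{x})$ is at the extreme $u$-value of $E$, the vertical line through it meets $E$ only at $\mathbf{p}_0$, and the $\pi$-parallel hyperplane through $\mathbf{x}$ is tangent to $S^{n-1}$, cutting out a single point rather than an $(n-2)$-sphere. In this case one begins the chain with the $\xi$-parallel sphere through $\mathbf{x}$, which by non-parallelism of the normals is a genuine codimension-two sphere, and enters the staircase along the horizontal chord of $E$ through $\mathbf{p}_0$; the symmetric fix handles $\mathbf{y}=\pm\mathbf{n}_\xi$. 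After this minor cleanup the construction above produces the desired sequence.
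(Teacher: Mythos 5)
Your argument is correct in substance, but it takes a genuinely different route from the paper. The paper proves a stronger statement for an arbitrary $C^1$-smooth compact connected hypersurface $M$: it invokes the Jordan--Brouwer separation theorem to realize $M$ as the boundary of a bounded domain $\Omega$, joins $A$ to $B$ inside $\Omega$ by a broken line with edges parallel to coordinate axes, encloses each edge in a hyperplane parallel to $\pi$ or $\xi$, and then uses the fact that the (unbounded, at least one-dimensional, since $n>2$) intersection lines of consecutive hyperplanes pass through $\Omega$ and must therefore cross $M$. Your proof instead exploits the specific geometry of the sphere: projecting by $F(\mathbf{p})=(\langle\mathbf{p},\mathbf{n}_\pi\rangle,\langle\mathbf{p},\mathbf{n}_\xi\rangle)$ turns the two families of hyperplanes into vertical and horizontal lines, the sphere into a filled strictly convex planar region (using $n>2$ to see that fibres over interior points, being affine subspaces of dimension at least one, must cross the sphere), and the intersection condition $S_i\cap S_{i+1}\neq\emptyset$ into the condition that the corner of an $L$-move lies in that region; a staircase along the segment $[F(\mathbf{x}),F(\mathbf{y})]$ finishes the job. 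What each approach buys: the paper's argument generalizes to arbitrary compact connected $C^1$ surfaces at the cost of invoking Jordan--Brouwer, whereas yours is more elementary and explicit but tied to convexity and the sphere. Two loose ends in your write-up deserve a sentence each in a final version: the case $F(\mathbf{x})=F(\mathbf{y})$ (where a single hyperplane parallel to $\pi$ through both points gives $k=1$), and the fact that when $F(\mathbf{x})$ lies on the boundary of $E$ but is not an extreme point in the relevant axis direction, the corresponding chord is nondegenerate and its relative interior consists of interior points of $E$, which is what lets you enter the staircase; you address only the fully degenerate tangential case explicitly.
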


We shall prove more.

\begin{lemma} Let $M$ be  a $C^1$-smooth compact connected
surface in $\mathbb{R}^n$, $n>2$ and let $\alpha\nparallel\beta$ be
hyperplanes. Then for any points $A,B\in M$ one may find points
$C_0=A,C_1,\dots,C_{n-1},C_n=B$ on $M$ such that
$C_{i-1}C_i\parallel\alpha$ or $C_{i-1}C_i\parallel\beta,$ $1\le
i\le n.$
\end{lemma}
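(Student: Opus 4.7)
The plan is to translate the chain condition into a statement about two scalar linear functions and then construct the chain by combining path-connectedness of $M$ with a local submersion argument for the projection onto the $(f,g)$-plane.

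First, let $\vec{a},\vec{b}$ denote unit normals to the hyperplanes $\alpha,\beta$; since $\alpha\nparallel\beta$ they are linearly independent. Set $f(x)=\vec{a}\cdot x$ and $g(x)=\vec{b}\cdot x$. Then $C_{i-1}C_i\parallel\alpha$ is equivalent to $f(C_{i-1})=f(C_i)$, and $C_{i-1}C_i\parallel\beta$ to $g(C_{i-1})=g(C_i)$. Introducing the map $F=(f,g)\colon M\to\mathbb{R}^2$ with image $S:=F(M)$ (which is compact and connected), the problem reduces to the following: given $p=F(A)$ and $q=F(B)$ in $S$, find an axis-aligned polyline $p=s_0,s_1,\ldots,s_n=q$ with all corners in $S$, and then lift each $s_i$ to some $C_i\in F^{-1}(s_i)\cap M$ (with $C_0=A$ and $C_n=B$).

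The essential geometric content is that $S$ has non-empty interior in $\mathbb{R}^2$. Since $M$ is a $C^1$-smooth hypersurface of dimension $n-1\geq 2$, at a point $p\in M$ the restrictions of $f,g$ to $T_pM$ are linearly independent unless the unit normal to $M$ at $p$ lies in $\mathrm{span}(\vec{a},\vec{b})$. The latter is a closed nowhere dense condition on $M$ (the Gauss-map preimage of a great circle of $S^{n-1}$), so $F$ is a local submersion on an open dense subset of $M$, and therefore $S$ has two-dimensional interior. This is precisely where the hypothesis $n>2$ is used. To carry out the construction, I would take a continuous path $\gamma\colon[0,1]\to M$ from $A$ to $B$ (possible since a compact connected $C^1$ manifold is path-connected), push it forward to $F\circ\gamma$ in $S$, and replace this curve by an axis-aligned polyline whose corners lie in $S$: a finite open cover of its image by neighborhoods of submersion points, together with the implicit function theorem, lets me convert each sub-arc into a bounded number of $\alpha$- or $\beta$-hops, and lifting back to $M$ so as to match $A$ and $B$ at the endpoints gives the required chain.

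The main obstacle is obtaining the sharp bound of exactly $n$ hops rather than an a priori larger finite number. I would attempt this by induction on $n$: for the base case $n=3$, the two ``rectangle corners'' $(f(A),g(B))$ and $(f(B),g(A))$ typically give a 2-hop chain directly, and a third hop is available via a point chosen in the interior of $S$ when both rectangle corners fall outside $S$. For the inductive step, I would slice $M$ by a carefully chosen hyperplane parallel to $\alpha$ (or $\beta$), reducing to an $(n-1)$-dimensional instance of the statement (on a connected component of the slice, chosen so that a single additional hop re-enters the original component containing $B$), adding exactly one extra hop. The secondary technical point is to handle the nowhere dense critical set where $F$ fails to be a submersion; this should yield to a perturbation or density argument.
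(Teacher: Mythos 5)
Your reduction to the $(f,g)$-plane is a correct reformulation: a chain with each segment parallel to $\alpha$ or $\beta$ is the same as an axis-aligned polyline from $F(A)$ to $F(B)$ whose corners lie in $S=F(M)$, and any such corner lifts back to $M$. But the entire content of the lemma is the existence of such a polyline with at most $n$ corners, and that is precisely the step your proposal does not supply. The two devices you offer in its place do not work. First, the claim that the set where the normal of $M$ lies in $\mathrm{span}(\vec a,\vec b)$ is nowhere dense is false for a general $C^1$ hypersurface: $M$ may contain an open flat piece whose normal is $\vec a$ (e.g.\ a $C^1$-smoothed pillbox), so submersion points need not be dense along your path $\gamma$, and the ``finite cover by submersion neighborhoods'' step collapses. (You do get \emph{one} submersion point, hence $\mathrm{int}\,S\neq\emptyset$, because the Gauss map of a compact hypersurface is onto $S^{n-1}$ --- but a single interior square is far from enough.) Second, even granting a covering by small squares in $S$, you obtain an uncontrolled number of hops; your fixes for this are explicitly conjectural (``typically give a 2-hop chain'', ``a carefully chosen hyperplane''), and the inductive slicing step is not an argument: a hyperplane section of $M$ need not be a connected $C^1$ surface, and nothing guarantees that one extra hop reconnects the relevant components. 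The set $S$ is merely a compact connected planar set with nonempty interior; such sets (thin spirals, etc.) do not in general admit short axis-aligned polylines between arbitrary pairs of points, so some global property of $M$ beyond connectedness must be invoked.

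The missing idea, which is the heart of the paper's proof, is to use the Jordan--Brouwer separation theorem: $M=\partial\Omega$ for a bounded domain $\Omega$. After a linear change of coordinates with $\alpha\perp Ox_1$, $\beta\perp Ox_2$, one joins $A$ to $B$ by a staircase $D_0=A,D_1,\dots,D_n=B$ with interior corners in $\Omega$ and each segment parallel to a coordinate axis; each segment then lies in a hyperplane $\gamma_i$ parallel to $\alpha$ or $\beta$. The corner $C_i$ is produced by noting that the $(n-2)$-plane $l_i=\gamma_i\cap\gamma_{i+1}$ passes through the point $D_i\in\Omega$, and since $\Omega$ is bounded, $l_i$ must cross $\partial\Omega=M$; any point of $l_i\cap M$ serves as $C_i$, and $C_{i-1},C_i\in\gamma_i$ forces $C_{i-1}C_i\parallel\alpha$ or $\beta$. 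In your language, $F(l_i)$ is a single point of $S$, so this is exactly the mechanism that places the polyline corners inside $S$ with the sharp count $n$; it also explains the hypothesis $n>2$ (for $n\le 2$ the intersection $\gamma_i\cap\gamma_{i+1}$ is discrete and may miss $M$). Without this or an equivalent global argument, the proposal has a genuine gap.
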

\begin{proof}After a non-singular linear transformation, may
assume that $\alpha\perp O{x_1}$ and $\beta\perp O{x_2}.$ By the
Jordan-Brouwer separation theorem, $M$ is the boundary of a
bounded domain $\Omega$ in $\mathbb {R}^n.$ We may find point
$D_0=A,D_1,\dots,D_{n-1},D_n=B\in\Omega$ such that $D_{i-1}D_i$ is
parallel to a coordinate axis, $1\le i\le n.$ Then for any $1\le
i\le n$ there exist a hyperplane $\gamma_i\supset D_{i-1}D_i$
parallel to $\alpha$ or $\beta.$ Since $D_i\in l_i\cap\Omega,$
where $l_i=\gamma_i\cap\gamma_{i+1},$ one may find point $C_i\in
l_i\cap M,$ $1\le i\le n-1.$ These points have the desired
property. The reason this argument fails in dimensions 1 and 2 is that the intersection of two 1 co-dimensional planes in $\mathbb{R}$ or $\mathbb{R}^2$ is discrete.
\end{proof}

We are now ready to continue with the Platonic solids.

\section{Dodecahedron}
We shall use the numbering of vertices presented in the plane projection of the dodecahedron below.

\begin{figure}[h]
\includegraphics[scale=0.7]{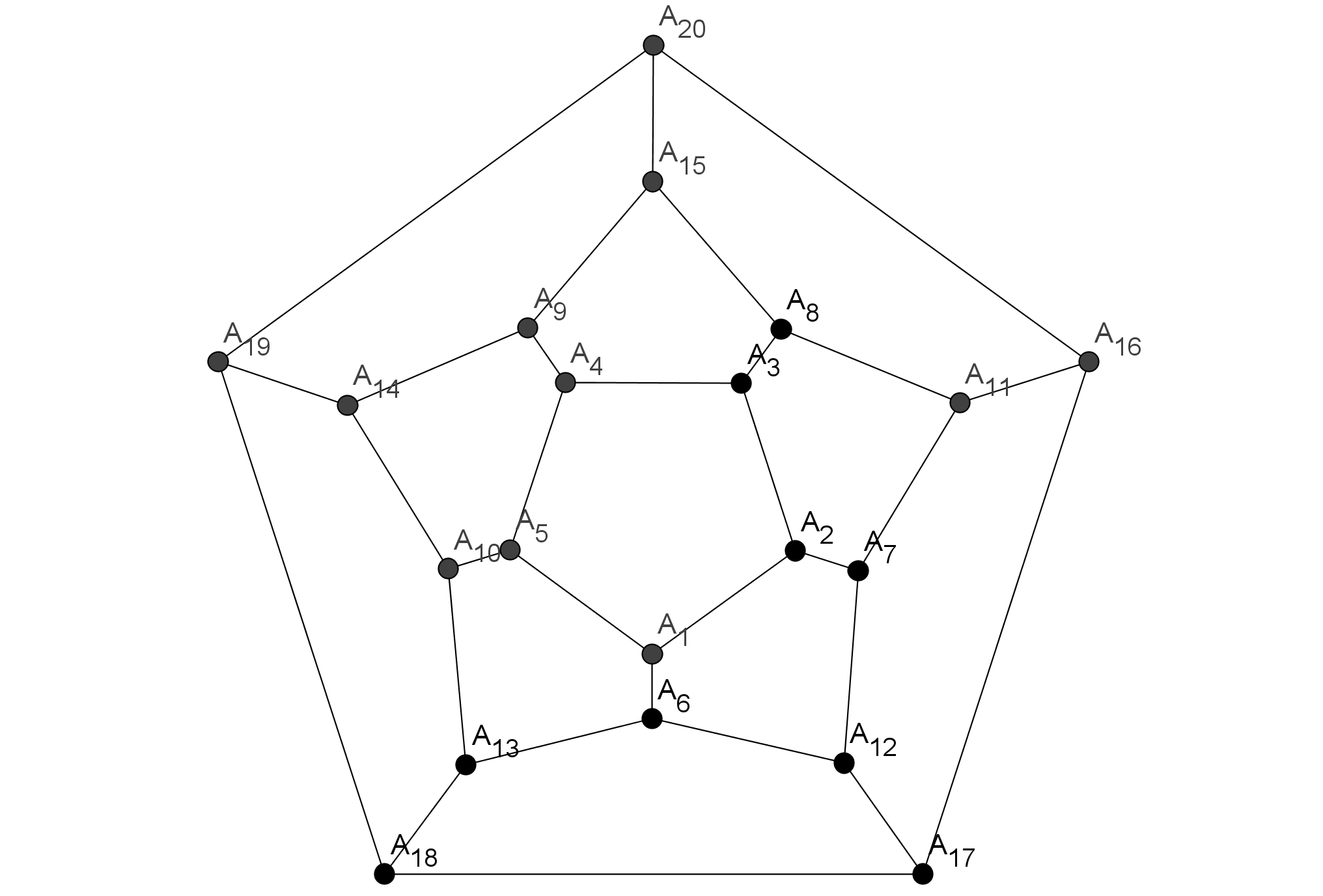}
\caption{Plane projection of a regular dodecahedron}
\label{dodecapic}
\end{figure}

We investigate the following question: Given a regular
dodecahedron $A_1,\ldots A_{20}$ and $\Gamma$ the circumscribed
sphere of the polytope, determine all real numbers $\lambda$ such
that
\begin{equation}\label{dodeca}\sum_{i=1}^{20}MA_i^{\lambda}\end{equation}
is independent of the position of $M$ on $\Gamma$.

First we shall prove the following

\begin{proposition}\label{dprop1} There are at most eight numbers $\lambda$ for
which the sum $\sum_{i=1}^{20}MA_i^{\lambda}$ is independent of
the position of $M$ on $\Gamma$.
\end{proposition}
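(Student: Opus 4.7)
The plan is to apply Lemma \ref{OL} to the difference of the sum \eqref{dodeca} taken at two carefully chosen points of $\Gamma$. The naive choice --- comparing two vertices --- fails, because by symmetry every vertex of the dodecahedron sees the same multiset of distances to the other nineteen, so the resulting relation is identically zero. Instead I would pick $M_1 = A_1$ (a vertex) and $M_2 = N$, the point of $\Gamma$ lying on a fivefold symmetry axis of the dodecahedron (through the centers of two opposite faces). These two points are inequivalent under the symmetry group and produce different distance-multiplicity vectors.

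Around $M_1$ the stabilizer of $A_1$ partitions the remaining $19$ vertices into classes yielding five distinct positive distances $d_2 < d_3 < d_4 < d_5 < d_6 = 2R$ with multiplicities $3, 6, 6, 3, 1$; the diameter $d_6 = 2R$ is attained only at the antipodal vertex. (This can be read off from the standard coordinates $(\pm 1, \pm 1, \pm 1)$, $(0, \pm 1/\phi, \pm\phi)$, and cyclic permutations.) Around $M_2 = N$ the fivefold rotation arranges the $20$ vertices into four horizontal pentagonal rings at four different heights, giving four distinct positive distances $e_1 < e_2 < e_3 < e_4$, each of multiplicity $5$. If the sum \eqref{dodeca} is constant on $\Gamma$, then in particular it takes the same value at $M_1$ and $M_2$; using $0^\lambda = 0$ for $\lambda > 0$, this yields
\begin{equation*}
3 d_2^\lambda + 6 d_3^\lambda + 6 d_4^\lambda + 3 d_5^\lambda + d_6^\lambda - 5 e_1^\lambda - 5 e_2^\lambda - 5 e_3^\lambda - 5 e_4^\lambda = 0. \tag{$\star$}
\end{equation*}

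The left side of $(\star)$ is of the form $\sum a_j b_j^\lambda$ with at most nine positive bases and nonzero coefficients; should some $e_i$ coincide with a $d_j$, the merged coefficient is $-5 + k$ with $k \in \{1, 3, 6\}$, still nonzero, so the bound $n \le 9$ survives. By Lemma \ref{OL}, $(\star)$ has at most $n - 1 \le 8$ real zeros unless it vanishes identically. The identically-zero case is excluded because $d_6 = 2R$ is the diameter of $\Gamma$ and strictly exceeds every other $d_j$ and every $e_i$: the antipode of $N$ lies over the opposite face center, which is not a vertex, so no $e_i$ equals $2R$. Hence the $d_6^\lambda$ term with coefficient $+1$ dominates as $\lambda \to +\infty$ and the left side tends to $+\infty$. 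Values $\lambda \le 0$ require no extra bookkeeping: for $\lambda < 0$ the sum at any vertex is infinite (contradicting constancy on $\Gamma$), and at $\lambda = 0$ relation $(\star)$ already reduces to $19 - 20 = -1 \neq 0$. The step I expect to be most delicate is the orbit/distance bookkeeping at $N$, namely verifying that the four pentagonal layers give four genuinely distinct heights and thus four distinct values $e_i$; but this is a routine coordinate calculation, and even in the presence of accidental coincidences with the $d_j$'s the crucial fact that no merged coefficient can vanish preserves the bound of $8$.
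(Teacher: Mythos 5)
Your proposal is correct and follows essentially the same route as the paper: the authors also compare the value at a vertex with the value at the point of $\Gamma$ on the axis through a face center, obtain the same $19$-versus-$20$ distance relation with multiplicities $3,6,6,3,1$ and $5,5,5,5$, rule out identical vanishing via the dominant diameter term as $\lambda\to\infty$, and invoke Lemma \ref{OL} to get the bound of eight. Your extra care about possible coincidences among the bases (merged coefficients staying nonzero) is a small refinement the paper omits, but the argument is the same.
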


\begin{proof}
Let $A_1A_2A_3A_4A_5$ be a face of the dodecahedron and let $A$ be
the center of the face. Define $C=OA\bigcap\Gamma,$ where $O$ is
the center of $\Gamma$. Then for all numbers $\lambda$ for which
\eqref{dodeca} is constant we have

\begin{equation}\label{eqstepeni}\sum_{i=1}^{20}CA_i^{\lambda}=\sum_{i=1}^{20}A_1A_i^{\lambda}.\end{equation}
But we have that $|CA_i|=:d_1$ for $i\in\{1,\ldots,5\}$,
$|CA_i|=:d_2$ for $i\in\{6,\ldots,10\}$, and so on $|CA_i|=:d_4$
for $i\in\{16,\ldots,20\}$. We also have $|A_1A_i|=:d_5$ for
$i\in\{6,5,2\}$, $|A_1A_i|=:d_6$ for $i\in\{3,4,7,10,13,12\}$,
$|A_1A_i|=:d_7$ for $i\in\{8,9,11,14, 17, 18\}$ and
$|A_1A_i|=:d_8$ for $i\in\{15, 16, 19\}$, $|A_1A_{20}|=:d_9$
(Figure \ref{dodecapic}). Then from the fact that \ref{dodeca} is
a constant function of $M$ on $\Gamma$ we have
$$\sum_{i=1}^45d_i^{\lambda}=3d_5^{\lambda}+6d_6^{\lambda}+6d_7^{\lambda}+3d_8^{\lambda}+d_9^{\lambda},$$ or
$$5d_1^{\lambda}+5d_2^{\lambda}+5d_3^{\lambda}+5d_4^{\lambda}-3d_5^{\lambda}-6d_6^{\lambda}-6d_7^{\lambda}-3d_8^{\lambda}-d_9^{\lambda}=0.$$
Now it is easy to see that this cannot hold for all $\lambda$ as $d_9>d_i$ for $i\neq9$ and we have that when
$\lim_{\lambda\to\infty} 5d_1^{\lambda}+5d_2^{\lambda}+5d_3^{\lambda}+5d_4^{\lambda}-3d_5^{\lambda}-6d_6^{\lambda}-6d_7^{\lambda}-3d_8^{\lambda}-d_9^{\lambda}=-\infty.$
Hence by Lemma \ref{OL} we have that there are at most eight real number $\lambda$ for which \eqref{dodeca} is constant.
\end{proof}

\begin{proposition}\label{dprop2} The sum \eqref{dodeca} is constant for $\lambda=2,4,6,8,10$.
\end{proposition}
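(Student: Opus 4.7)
The plan is to exploit the symmetry of the dodecahedron in a way that reduces the statement to a calculation about homogeneous polynomials of low degree. Place $\Gamma$ centered at the origin with radius $R$ and the dodecahedron inscribed in the unit sphere around the same origin, and write $\mathbf{a}_i$ and $\mathbf{m}$ for the position vectors of $A_i$ and $M$. Since $|MA_i|^2=R^2+1-2\mathbf{m}\cdot\mathbf{a}_i$, the binomial theorem gives
$$\sum_{i=1}^{20}|MA_i|^{2k}=\sum_{j=0}^{k}\binom{k}{j}(R^2+1)^{k-j}(-2)^j P_j(\mathbf{m}),\qquad P_j(\mathbf{m}):=\sum_{i=1}^{20}(\mathbf{m}\cdot\mathbf{a}_i)^j.$$
Hence it suffices to prove, for $j=0,1,\ldots,5$, that the homogeneous polynomial $P_j$ depends only on $|\mathbf{m}|$, and therefore is constant on $\Gamma$.

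The key observation is that $P_j$ is invariant under the full symmetry group $G$ of the dodecahedron, because $G$ permutes the vertices $\mathbf{a}_i$ and acts by orthogonal transformations: for $g\in G$, $P_j(g\mathbf{m})=\sum_{i}(\mathbf{m}\cdot g^{-1}\mathbf{a}_i)^j=P_j(\mathbf{m})$. Central symmetry of the dodecahedron, together with the homogeneity of $P_j$, immediately yields $P_1=P_3=P_5\equiv 0$, while $P_0=20$ is trivial. It remains to show that $P_2(\mathbf{m})=c_2|\mathbf{m}|^2$ and $P_4(\mathbf{m})=c_4|\mathbf{m}|^4$ for suitable constants. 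For this I would appeal to the classical fact that the invariant ring of $G$ (the Coxeter reflection group of type $H_3$) on $\mathbb{R}^3$ is a polynomial ring with generators in degrees $2,6,10$; in particular, the spaces of $G$-invariant homogeneous polynomials of degrees $2$ and $4$ are each one-dimensional, spanned by $|\mathbf{m}|^2$ and $|\mathbf{m}|^4$.

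An equivalent, more directly applicable reformulation is that the $20$ vertices form a spherical $5$-design, so that for every polynomial $f$ on $\mathbb{R}^3$ of degree at most $5$ one has $\sum_i f(\mathbf{a}_i)=\frac{20}{4\pi}\int_{S^2}f\,d\sigma$; applying this with $f(\mathbf{x})=(\mathbf{m}\cdot\mathbf{x})^j$ and using rotational symmetry of the right-hand integral gives the same conclusion.

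The main obstacle is supplying this symmetry input in a way that fits the elementary style of the paper. The cleanest option is to quote invariant theory or the spherical-design property as a black box. A self-contained alternative is to verify $P_2=c_2|\mathbf{m}|^2$ and $P_4=c_4|\mathbf{m}|^4$ by direct computation from explicit vertex coordinates: once one knows (by the $G$-invariance argument above) that each of $P_2$ and $P_4$ lies in a one-dimensional invariant subspace, it is enough to evaluate both sides at a single convenient unit vector $\mathbf{m}$ (say one pointing to a vertex of the dodecahedron) to pin down the constants, after which the identity is forced on all of $\Gamma$.
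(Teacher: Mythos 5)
Your proof is correct, but it takes a genuinely different route from the paper. The paper slices the dodecahedron by planes parallel to a face into four parallel regular pentagons, applies the planar Theorem \ref{OT} (with the vertical offsets $h_i$) to each pentagon to handle $\lambda=2,4,6,8$, observes that the pentagons pair up into two regular decagons to capture the top term at $\lambda=10$, and then invokes Lemma \ref{hcon} to propagate constancy from these circles to all of $\Gamma$. You instead expand $|MA_i|^{2k}=(R^2+1-2\mathbf{m}\cdot\mathbf{a}_i)^k$ binomially, kill the odd moments $P_1,P_3,P_5$ by central symmetry, and reduce the even ones to the statement that the icosahedral group $H_3$ has no invariant homogeneous polynomials in degrees $2$ and $4$ other than multiples of $|\mathbf{m}|^2$ and $|\mathbf{m}|^4$ (equivalently, that the $20$ vertices form a spherical $5$-design). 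This is sound: the invariant degrees of $H_3$ are $2,6,10$, so the degree-$2$ and degree-$4$ invariant spaces are indeed one-dimensional, and your argument then works uniformly for any sphere concentric with the circumscribed one, with no need for the connectivity Lemma \ref{hcon}. What your approach buys is conceptual clarity — it makes transparent why constancy should fail beyond $\lambda=10$ (the degree-$6$ invariant of $H_3$ obstructs $P_6$ from being radial), and it handles all five exponents at once. What it costs is the paper's elementary, self-contained style: you must either quote Chevalley--Shephard--Todd / the design property as a black box or fall back on the explicit coordinate computation you sketch; your fallback is legitimate, since once $G$-invariance pins each of $P_2,P_4$ into a known one-dimensional space, evaluating at a single point fixes the constant.
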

\begin{proof}
Consider the plane $\pi$, which contains the points
$A_1,A_2,A_3,A_4,A_5$, $\pi_2$, which contains the points
$A_6,A_7,A_8,A_9,A_{10}$, $\pi_3$, which contains the points
$A_{11},A_{12},A_{13},A_{14},A_{15}$, $\pi_4$, which contains the
points $A_{16},A_{17},A_{18},A_{19},A_{20}$. First we notice that
the planes $\pi_i$ are parallel. Second in each of the planes
$\pi_i$ the points $A_{5i-4},A_{5i-3}, A_{5i-2}, A_{5i-1}, A_{5i}$
are vertices of a regular pentagon, moreover the centers of these
polygons lie on a line trough the center of the dodecahedron. Now
consider a plane $\alpha$, parallel to $\pi_1$ and take
$\omega=\alpha\bigcap\Gamma$. Now we have that for each
$M\in\omega$
$$\sum_{i=1}^{20}|MA_i|^{\lambda}=\sum_{i=1}^{5}|MA_i^{'2}+h_1^2|^{\frac{\lambda}{2}}+
\sum_{i=6}^{10}|MA_i^{'2}+h_2^2|^{\frac{\lambda}{2}}+\sum_{i=11}^{15}|MA_i^{'2}+h_3^2|
^{\frac{\lambda}{2}}+\sum_{i=16}^{20}|MA_i^{'2}+h_4^2|^{\frac{\lambda}{2}},$$
where $h_i$ is the distance between the planes $\pi_i$ and
$\alpha$ and $A_i^{'}$ is the projection of the point $A_i$ to the
plane $\alpha$. Now by Theorem \ref{OT} for
$\lambda\in\{2,4,6,8\}$ each of the sums
$\sum_{i=1}^{5}|M_1A_i^{'2}+h_1^2|^{\frac{\lambda}{2}},\sum_{i=6}^{10}|M_2A_i^{'2}+h_2^2|
^{\frac{\lambda}{2}},\sum_{i=11}^{15}|M_3A_i^{'2}+h_3^2|^{\frac{\lambda}{2}},\sum_{i=16}^{20}|M_4A_i^{'2}+h_4^2|^{\frac{\lambda}{2}}$
is constant.

Now for $\lambda=10$ we have that
$$\sum_{i=1}^{5}|MA_i^{'2}+h_1^2|^{5}=|MA_i^{'}|^{10}+c_4\sum_{i=1}^5|MA_i^{'}|^8+
c_3\sum_{i=1}^5|MA_i^{'}|^6+c_2\sum_{i=1}^5|MA_i^{'}|^4+c_1\sum_{i=1}^5|MA_i^{'}|^2+c_0,$$
and as we know $c_j\sum_{i=1}^5|MA_i^{'}|^{2j}$ is constant for
$j=1,2,3,4$. Hence we only need to prove that
$\sum_{i=1}^{20}|MA_i^{'}|^{10}$ is constant. But
$A_1A_2A_3A_4A_5A_{16}A_{17}A_{18}A_{19}A_{20}$ and
$A_{6}A_{7}A_8A_9A_{10}A_{11}A_{12}A_{13}A_{14}A_{15}$ are
vertices of two regular decagons and hence by the Theorem \ref{OT}
we have that \eqref{dodeca} is constant for $\lambda=10$ and
$M\in\omega$.

Now by Lemma \ref{hcon} this result is easily extended to the whole sphere.
\end{proof}

We can limit the values of $\lambda$ with the desired property.
\begin{proposition}\label{dprop3}
All the values of $\lambda$ for which \eqref{dodeca} is independent of the position of $M$ on $\Gamma$ are among $2,4\ldots,18$.
\end{proposition}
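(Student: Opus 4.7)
The plan is to reduce the problem to the equatorial cross-section of $\Gamma$ perpendicular to a $5$-fold axis of the dodecahedron and apply Theorem~\ref{OT} with $n = 10$ to the two resulting decagon sums.

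Let $\omega$ be the equator of $\Gamma$ perpendicular to the $5$-fold axis through the centers of two opposite faces. As in the proof of Proposition~\ref{dprop2}, the twenty vertices split into the two opposite faces $\{A_1,\dots,A_5,A_{16},\dots,A_{20}\}$ at heights $\pm h_1$ from $\omega$ and the two middle rings $\{A_6,\dots,A_{15}\}$ at heights $\pm h_2$, and each of these two groups of ten vertices projects onto $\omega$ as a regular decagon, of radii $r_1 = \sqrt{R^2 - h_1^2}$ and $r_2 = \sqrt{R^2 - h_2^2}$ respectively. A direct inspection combining the $5$-fold rotation with the central symmetry of the dodecahedron shows that these two projected decagons occupy the \emph{same} ten angular positions on $\omega$; in particular the points $B_i$ appearing in Theorem~\ref{OT}, and the midpoints of their consecutive arcs, are the same for both decagons.

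Using the height version of Theorem~\ref{OT} described in the first Remark after its statement, for $M \in \omega$ the sum decomposes as
$$\sum_{i=1}^{20}|MA_i|^{\lambda} \;=\; R_{10}^{(1)}(M,\lambda) + R_{10}^{(2)}(M,\lambda),$$
where $R_{10}^{(k)}(\,\cdot\,,\lambda)$ is the decagon sum of Theorem~\ref{OT} at height $h_k$. Theorem~\ref{OT} applied to $n = 10$ for each summand assigns the locations of the maxima and minima of $R_{10}^{(k)}$ on $\omega$ using only the range of $\lambda$ (the integer $m$ with $2m \leq \lambda \leq 2m+2$ in case (ii), and analogously in cases (i) and (iii)), not the value of $h_k$ or the decagon radius. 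Since the two decagons share the same $B_i$ and the same arc midpoints, $R_{10}^{(1)}$ and $R_{10}^{(2)}$ are \emph{in phase} on $\omega$: both attain their maximum at a common point $M_{\max} \in \omega$ and their minimum at a common point $M_{\min} \in \omega$. Consequently the sum $R_{10}^{(1)} + R_{10}^{(2)}$ is constant on $\omega$ if and only if each $R_{10}^{(k)}$ is individually constant; in all other cases the strict inequalities $R_{10}^{(k)}(M_{\max},\lambda) > R_{10}^{(k)}(M_{\min},\lambda)$ for $k = 1, 2$ add up to a strict inequality for the sum. By Theorem~\ref{OT}(ii), each $R_{10}^{(k)}$ is constant on $\omega$ exactly when $\lambda \in \{0, 2, 4, \dots, 18\}$.

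Since constancy of $\sum|MA_i|^\lambda$ on $\Gamma$ forces constancy on the equator $\omega$, the admissible values of $\lambda$ lie in $\{0, 2, \dots, 18\}$; discarding the trivial case $\lambda = 0$ yields the desired conclusion. The main obstacle I anticipate is the explicit geometric verification that the two derived decagons really occupy the same ten angular positions on $\omega$ -- this alignment is what makes the ``in-phase'' structure of the $R_{10}^{(k)}$-sums available and rules out cancellation, after which Theorem~\ref{OT} simultaneously eliminates every non-even-integer $\lambda$ and every even integer $\lambda \geq 20$.
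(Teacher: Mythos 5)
Your proposal is correct and follows essentially the same route as the paper: the same equatorial section $\omega$ perpendicular to a $5$-fold axis, the same decomposition of the twenty vertices into two concentric, equally oriented (homothetic) regular decagons at heights $h_1$ and $h_2$, and the same application of Theorem~\ref{OT} with $n=10$ to each decagon sum. The only difference is that you spell out the ``in-phase'' argument (common maximizers and minimizers force the sum of the two decagon functions to be constant only when each is), which the paper compresses into the parenthetical ``using Theorem~\ref{OT} for each of the decagons independently''; this is a welcome clarification, not a deviation.
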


\begin{proof}
Let $\pi$ be a plane trough the center of $\Gamma$, parallel to the face $A_1A_2A_3A_4A_5$.

Let $\omega=\pi\bigcap\Gamma$ and $A_i'$ be the projection of the
point $A_i$ in $\pi$. For $M\in\Gamma$ we have
$$\sum_{i=1}^{20}|MA_i|^{\lambda}=\sum_{i=1}^5|MA_i'^2+h_1^2|^{\lambda/2}+\sum_{i=16}^{20}
|MA_i'^2+h_1^2|^{\lambda/2}+\sum_{i=6}^{15}|MA_i'^2+h_2^2|^{\lambda/2},$$
where $h_1$ is the distance between $\pi$ and the planes
containing the faces $A_1A_2A_3A_4A_5$ and
$A_{16}A_{17}A_{18}A_{19}A_{20}$ and $h_2$ is the distance between
$\pi$ and the planes containing the vertices $A_6, A_7, A_8, A_9,
A_{10}$ and $A_{11}, A_{12}, A_{13}, A_{14}, A_{15}$. Now
$A_1'A_2'A_3'A_4'A_5'A_{16}'A_{17}'A_{18}'A_{19}'A_{20}'$ and \\
$A_6'A_7'A_8'A_9'A_{10}'A_{11}'A_{12}'A_{13}'A_{14}'A_{15}'$ are
two regular homothetic dodecagons. By Theorem \ref{OT} the sum
\eqref{dodeca} is independent of the position of $M$ on
$\omega=\Gamma\bigcap\pi$ for $\lambda=2,4\ldots,18$ and these are
the only powers with this property(using Theorem \ref{OT} for each
of the decagons independently).
\end{proof}

\begin{remark}
\emph{If $\Gamma$ is a sphere concentric to the sphere
circumscribed around the regular dodecahedron we can still
consider these questions. If we apply the same approach as in the
proof of Proposition \ref{dprop1} we can obtain that there are at
most nine real $\lambda$ with the desired property. Again we
consider the equation \eqref {eqstepeni}, but instead of point
$A_1$ we consider the point $A_1'=OA_1\bigcap\Gamma$. Again using
Lemma \ref{OL} the result follows. Propositions \ref{dprop2} and
\ref{dprop3} still hold in this case, the proofs being analogous.}
\end{remark}

\section{Icosahedron}
Now we begin with the consideration of the icosahedron. We shall use the
numbering of vertices presented in the plane projection of the icosahedron below.

\begin{figure}[h]

\includegraphics[scale=0.8]{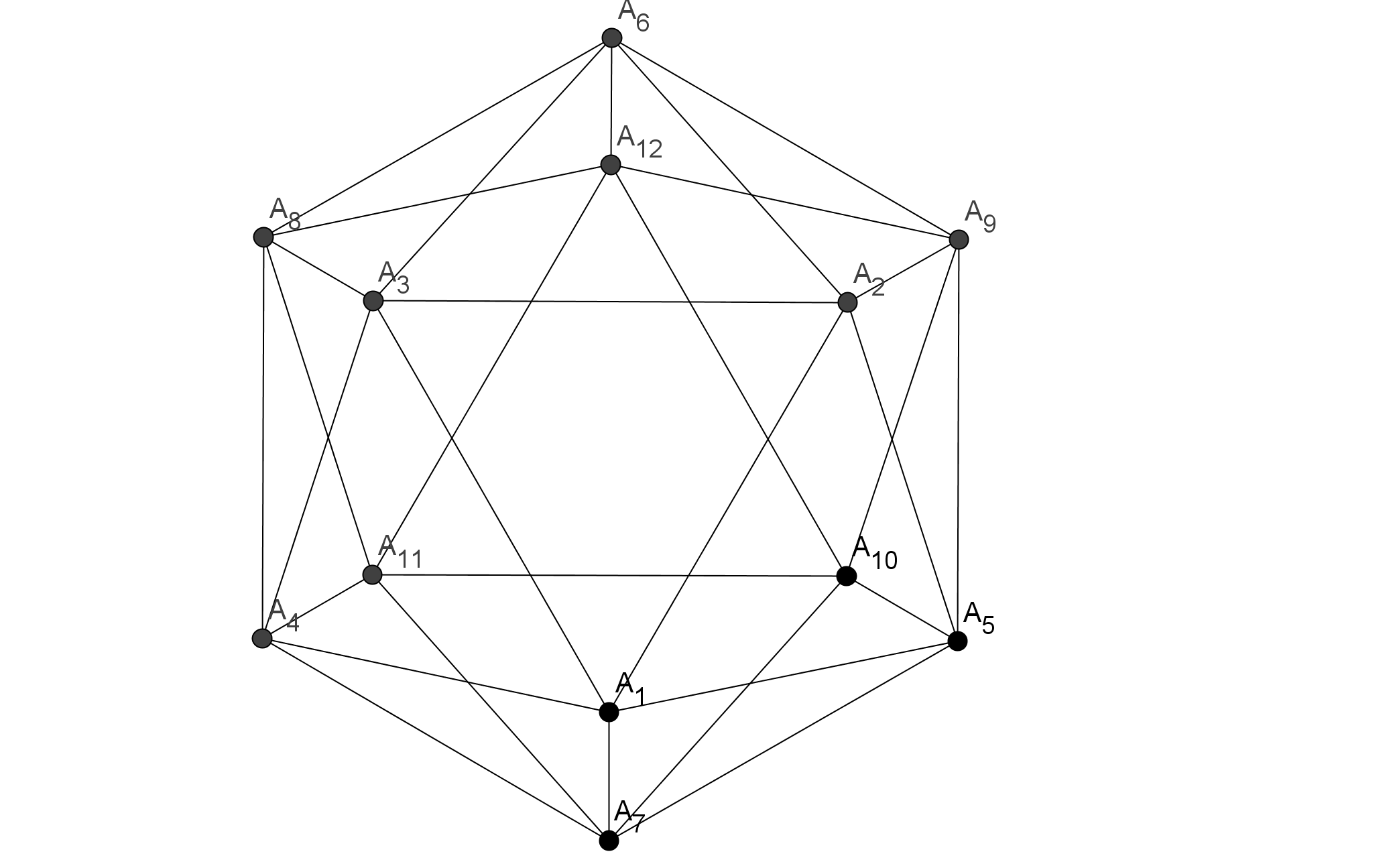}
\caption{Plane projection of a regular icosahedron}

\label{icosapicture}
\end{figure}

Assume that $A_i, i\in\{1,\ldots,12\}$ are the vertices of a regular icosahedron.
Let $\Gamma$ be a sphere concentric to the circumscribed sphere of the polytope.
We investigate the question for which $\lambda$ \begin{equation}\label{icosa}
\sum_{i=1}^{12}|A_iM|^{\lambda}\end{equation}
is constant for every $M\in\Gamma$.

\begin{proposition}\label{iprop2}
The sum (\ref{icosa}) is constant for $\lambda=2,4,6$.
\end{proposition}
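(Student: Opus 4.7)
The plan is to reduce to the planar case by slicing $\Gamma$ with planes perpendicular to a $5$-fold axis of the icosahedron and invoking Theorem \ref{OT}, then to propagate the result to the whole sphere via Lemma \ref{hcon}. The strategy mirrors that of Proposition \ref{dprop2}.

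First I fix a $5$-fold symmetry axis $\ell$ joining two antipodal vertices — say $A_1$ and $A_{12}$. The remaining ten vertices then split into two sets $P_1=\{A_2,\ldots,A_6\}$ and $P_2=\{A_7,\ldots,A_{11}\}$, each lying in a plane perpendicular to $\ell$ and forming a regular pentagon centred on $\ell$. I next choose an arbitrary plane $\alpha$ perpendicular to $\ell$ that meets $\Gamma$ in a nondegenerate circle $\omega$, let $A_i'$ be the orthogonal projection of $A_i$ onto $\alpha$, and let $h_i$ denote the (constant-per-slice) distance from the plane containing $A_i$ to $\alpha$, so that for every $M\in\omega$,
$$|MA_i|^2=|MA_i'|^2+h_i^2.$$

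The two polar vertices $A_1$ and $A_{12}$ project to the centre $\ell\cap\alpha$ of $\omega$, so $|MA_1|$ and $|MA_{12}|$ are already constant on $\omega$. For each pentagon $P_j$, the projected pentagon $P_j'$ is regular and concentric to $\omega$, so
$$\sum_{A\in P_j}\bigl(|MA'|^2+h_j^2\bigr)^{\lambda/2}$$
is precisely of the form $R_5(M,\lambda)$ treated in Theorem \ref{OT}, and is therefore constant on $\omega$ whenever $\lambda$ is an even integer with $0\le\lambda<2\cdot 5=10$, which in particular covers $\lambda\in\{2,4,6\}$. Summing the four contributions, $\sum_{i=1}^{12}|MA_i|^{\lambda}$ is constant on $\omega$ for each such $\lambda$.

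To promote this from a one-parameter family of circles to all of $\Gamma$, I repeat the argument with a second $5$-fold axis $\ell'$ not parallel to $\ell$; the icosahedron has six distinct, pairwise non-parallel $5$-fold axes, so such an $\ell'$ certainly exists. Applying Lemma \ref{hcon} to the two non-parallel hyperplane directions $\ell^{\perp}$ and $\ell'^{\perp}$ then transfers the constancy from the union of the two families of slicing circles to every point of $\Gamma$. The main item to verify carefully is the $1{+}5{+}5{+}1$ decomposition of the vertex set along a vertex-to-vertex axis and the fact that the two middle pentagons are regular and concentric on $\ell$; once these symmetry facts are in hand, the rest is a routine assembly of Theorem \ref{OT} and Lemma \ref{hcon}.
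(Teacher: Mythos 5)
Your proof is correct, but it takes a genuinely different route from the paper's. The paper slices $\Gamma$ by planes parallel to a \emph{face} (a $3$-fold axis), so the twelve vertices fall into four equilateral triangles; since a triangle only controls even powers $\lambda<6$ via Theorem \ref{OT}, the paper must handle $\lambda=6$ by a separate regrouping of the twelve projected points into two regular hexagons ($A_1'A_2'A_3'A_{10}'A_{11}'A_{12}'$ and $A_4'\cdots A_9'$). You instead slice perpendicular to a $5$-fold vertex-to-vertex axis, getting the $1+5+5+1$ decomposition: the two polar vertices contribute constants on each slice circle, and each regular pentagon is covered by Theorem \ref{OT} for all even $\lambda<10$ in one stroke, with no regrouping needed; the passage to all of $\Gamma$ via a second non-parallel $5$-fold axis and Lemma \ref{hcon} is the same globalization step the paper uses. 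Two remarks. First, your vertex labels ($A_1,A_{12}$ antipodal, $\{A_2,\dots,A_6\}$ and $\{A_7,\dots,A_{11}\}$ the two pentagons) do not match the paper's Figure \ref{icosapicture}, where the groups $A_{3i-2}A_{3i-1}A_{3i}$ are faces; this is harmless since the $1+5+5+1$ structure along a vertex axis is a true fact about the icosahedron, but you should say explicitly that you are relabelling. Second, your argument actually proves more than the stated proposition: it gives constancy for $\lambda=8$ as well, a case the paper only reports as a computer observation in the remark following Proposition \ref{iprop3}; this is consistent with the icosahedron being a spherical $5$-design, and is a genuine strengthening worth pointing out.
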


\begin{proof} Let $\pi$ be a plane parallel to the plane, containing $A_1A_2A_3$ and
such that $\pi\bigcup\Gamma=\omega$ is a circle. Let $h_1, h_2,
h_3, h_4$ be the distances from the planes containing $A_1A_2A_3$,
$A_4A_5A_6$, $A_7A_8A_9$ and $A_{10}A_{11}A_{12}$ to $\pi$.
Obviously these planes are all parallel to $\pi$.

Now we have that
\begin{equation}\label{prop2icosa}\sum_{i=1}^{12}|A_iM|^{\lambda}=\sum_{i=1}^{12}
(|A_i'M|^2+h_{\left\lceil\frac{i}{3}
\right\rceil}^2)^{\frac{\lambda}{2}},\end{equation} where
$M\in\omega$ and  $A_i'$ is the projection of $A_i$ in $\pi$ since
we have $|A_iM|^2=|A_i^{'}M|^2+h_{\left\lceil\frac{i}{3}
\right\rceil}^2$.

Now for $\lambda=2$ we have the sum
$$\sum_{i=1}^{12}(|A_i^{'}M|^2+h_{\left\lceil\frac{i}{3}
\right\rceil}^2)^{\frac{\lambda}{2}}=
\sum_{i=1}^4|MA_{3i}^{'}|^2+|MA_{3i-1}^{'}|^2+|MA_{3i-2}^{'}|^2+3h_i^2,$$
which is constant since from the planar case we know that
$|MA_{3i}^{'}|^2+|MA_{3i-1}^{'}|^2+|MA_{3i-2}^{'}|^2$ is constant.

For $\lambda=4$ we have the sum
$$\sum_{i=1}^{12}(|A_i^{'}M|^2+h_{\left\lceil\frac{i}{3}
\right\rceil}^2)^{\frac{\lambda}{2}}=
\sum_{i=1}^4|MA_{3i}^{'}|^4+|MA_{3i-1}^{'}|^4+|MA_{3i-2}^{'}|^4+$$$$+2h_i^2
(|MA_{3i}^{'}|^2+|MA_{3i-1}^{'}|^2+|MA_{3i-2}^{'}|^2)+3h_i^4,$$
which is constant since from the planar case we know that each of
$|MA_{3i}^{'}|^4+|MA_{3i-1}^{'}|^4+|MA_{3i-2}^{'}|^4$ and
$|MA_{3i}^{'}|^2+|MA_{3i-1}^{'}|^2+|MA_{3i-2}^{'}|^2$ are
constant($A_1A_2A_3$, $A_4A_5A_6$, $A_7A_8A_9$ and
$A_{10}A_{11}A_{12}$ all project to equilateral triangles).

For $\lambda=6$ we have
$$\sum_{i=1}^{12}(|A_i^{'}M|^2+h_{\left\lceil\frac{i}{3}
\right\rceil}^2)^{\frac{\lambda}{2}}=
\sum_{i=1}^{12}|A_i^{'}M|^6+3h_{\left\lceil\frac{i}{3}
\right\rceil}^2|A_i^{'}M|^4+3h_{\left\lceil\frac{i}{3}
\right\rceil}^4|A_i^{'}M|^2+h_{\left\lceil\frac{i}{3}
\right\rceil}^6.$$

As we know each of $\sum_{i=1}^{12}|A_i^{'}M|^{\lambda}$ is
constant for $\lambda=2,4$ we need only prove that
$\sum_{i=1}^{12}|A_i^{'}M|^{6}$ is independent on the position of
$M$ on $\omega$. This follows directly from the consideration of
the planar case and the fact that
$A_1^{'}A_2^{'}A_3^{'}A_{10}^{'}A_{11}^{'}A_{12}^{'}$ and
$A_4^{'}A_5^{'}A_6^{'}A_7^{'}A_8^{'}A_9^{'}$ are vertices of
regular hexagons. Thus we have obtained that \eqref{icosa} is
independent of the position of $M$ on $\omega$ where $\omega$ is a
circle, obtained by intersecting a plane, parallel
to a plane containing a face of the icosahedron and $\Gamma$. \\

Now again from Lemma \ref{hcon} the proposition easily follows.
\end{proof}

\begin{proposition}\label{iprop3}
All the powers for which \eqref{icosa} is constant are among $2,4\ldots, 10$.
\end{proposition}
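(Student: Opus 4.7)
\emph{Plan.} I would mirror the strategy of Proposition \ref{dprop3}, with the pair of coplanar decagons there replaced by a pair of coplanar hexagons obtained by projecting the icosahedron along a $3$-fold axis. Let $\pi$ be the plane through the center $O$ of $\Gamma$ parallel to the face $A_1A_2A_3$ and set $\omega=\pi\cap\Gamma$; the first task is to describe the projections $A_i'$ of the twelve vertices onto $\pi$. The vertices lie in four planes parallel to $\pi$ at signed heights $\pm h_1,\pm h_2$ (three vertices each), and the central symmetry of the icosahedron pairs the triangle at height $+h_j$ with the antipodal triangle at $-h_j$. A pair of antipodal equilateral triangles projects onto $\pi$ as two triangles rotated by $60^\circ$ relative to each other, i.e.\ together as the vertices of a regular hexagon $H_j$ centered at $O$. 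Thus the $A_i'$ form two concentric regular hexagons $H_1,H_2$ of radii $R_1,R_2$, and a short coordinate check (for instance, placing the icosahedron with a $3$-fold axis along $(1,1,1)$) shows that $H_1$ and $H_2$ share the \emph{same} orientation, i.e.\ their vertices occupy the same six angular positions on $\omega$.

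For $M\in\omega$ one has $|MA_i|^2=|MA_i'|^2+h_{j(i)}^2$, hence
$$\sum_{i=1}^{12}|MA_i|^\lambda=\sum_{v\in H_1}\bigl(|Mv|^2+h_1^2\bigr)^{\lambda/2}+\sum_{v\in H_2}\bigl(|Mv|^2+h_2^2\bigr)^{\lambda/2}.$$
Each summand is an instance of $R_6(M,\lambda)$ from Theorem \ref{OT}, and that theorem (with $n=6$, so $2n-2=10$) asserts that a single such summand is independent of $M\in\omega$ precisely when $\lambda\in\{2,4,6,8,10\}$.

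It remains to exclude cancellation between the two summands when $\lambda\notin\{2,4,6,8,10\}$, and this is where the orientation match of $H_1$ and $H_2$ is essential. Parts (1)--(3) of Theorem \ref{OT} specify, for each summand separately, where on $\omega$ its extrema are attained --- at the hexagon vertices or at the arc-midpoints --- and the choice of which case occurs depends only on $\lambda$, not on $R_j$ or $h_j$. Since $H_1$ and $H_2$ share vertex positions on $\omega$, both summands attain their maxima at a common point $P\in\omega$ and their minima at a common point $Q\in\omega$, so the total sum is strictly larger at $P$ than at $Q$ and cannot be constant on $\omega$, let alone on $\Gamma$. The main obstacle I anticipate is the clean verification of the orientation match of $H_1$ and $H_2$; once that point is settled, the proposition reduces to applying Theorem \ref{OT} to each hexagon separately.
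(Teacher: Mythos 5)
Your proposal is correct and follows essentially the same route as the paper's proof: the same central plane parallel to a face, the same decomposition of the twelve projected vertices into two concentric, equally oriented (homothetic) regular hexagons at squared heights $h_1^2$ and $h_2^2$, and the same application of Theorem \ref{OT} with $n=6$ to each hexagon separately. The only difference is one of emphasis --- you spell out the no-cancellation step and the orientation check that the paper compresses into the word ``homothetic'' and the parenthetical remark at the end of its proof.
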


\begin{proof}
Let $\pi$ be a plane trough the center of $\Gamma$, parallel to
the face $A_1A_2A_3$. Let $\omega=\pi\bigcap\Gamma$ and $A_i'$ be
the projection of the point $A_i$ in $\pi$. For $M\in\Gamma$ we
have
$$\sum_{i=1}^{12}|MA_i|^{\lambda}=\sum_{i=1}^3(MA_i'^2+h_1^2)^{\lambda/2}+\sum_{i=4}^{9}
(MA_i'^2+h_1^2)^{\lambda/2}+\sum_{i=10}^{12}(MA_i'^2+h_2^2)^{\lambda/2},$$
where $h_1$ is the distance between $\pi$ and the planes
containing the faces $A_1A_2A_3$ and $A_{10}A_{11}A_{12}$ and
$h_2$ is the distance between $\pi$ and the planes containing the
vertices $A_4, A_5, A_6$ and $A_7, A_8, A_9$. Now
$A_1'A_2'A_3'A_{10}'A_{11}'A_{12}'$ and $A_4'A_5'A_6'A_7'A_8'A_9'$
are two regular homothetic hexagons. By Theorem \ref{OT} the sum
\eqref{icosa} is independent of the position of $M$ on
$\omega=\Gamma\bigcap\pi$ for $\lambda=2,4\ldots,10$ and these are
the only powers with this property(using Theorem \ref{OT} for each
of the hexagons independently).
\end{proof}

\begin{remark}\emph{A computer check suggest that all the powers $\lambda$
for which \eqref{dodeca} is independent of the position of $M$ on
$\Gamma$ are $\lambda=2,4\ldots, 10$ and all the powers $\lambda$
for which \eqref{icosa} is independent of the position of $M$ on
$\Gamma$ are $\lambda=2,4\ldots, 10$. This may be to the fact that
the regular dodecahedron and the regular icosahedron are dual. No
complete mathematical proof of these results is known to the
authors.}
\end{remark}

\section{Higher dimensional regular polytopes}
Now we begin with the consideration of the higher dimensional
regular polytopes. It is a known fact that in the Euclidean spaces
of dimension $n>4$ there exist only three $n$-dimensional regular
polytopes -- the regular simplex, the cross-polytope (orthoplex)
and the hypercube.

We are interested in the following question: For which points are
the extremal values of \begin{equation}\label{sum}\sum_{i=1}^t
|A_iM|^{\lambda}\end{equation} obtained, when $M$ varies on a
sphere, concentric to the sphere on which the points $A_i$ belong
when these points are the vertices of
\begin{itemize}
\item a regular simplex; \item a cross-polytope; \item a
hypercube.
\end{itemize}

\subsection{Regular simplex}
We begin with the consideration of the regular simplex. We shall prove the following:

\begin{theorem}\label{simplex}
Let $A_1\ldots A_{n+1}$ be a regular simplex in $\Bbb R^n$ and let
$\Gamma$ be a sphere concentric to the sphere circumscribed around
the given simplex. Put $B_i=OA_i\bigcap\Gamma$, where $O$ is the
center of $\Gamma$ and
$S_n(X,\lambda)=\sum_{i=1}^{n+1}(|XA_i|^2+h)^{\lambda/2}$, where
$h>0$ is some fixed real number. Then for any positive fixed $h$
we have
\begin{enumerate}

\item \label{simplex1}$\lambda<0$ Then the minimum of
$S_n(X,\lambda)$ is obtained when $X={B_iO}\bigcap\Gamma$ for some
vertex $B_i$. The maximum is obtained when $X\equiv B_i$ for some
$B_i$. When $\Gamma$ is the sphere circumscribed around the
regular simplex this sum is obviously unbounded ($X\to B_i$ for
some $B_i$). \item  \label{simplex2}$\lambda\in[0;4]$ If $\lambda$
is an even integer, then $S_n(X,\lambda)$ is independent of the
position of $M$ on $\Gamma$. Otherwise, let $2m<\lambda<2m+2$. If
$m$ is even (odd), then the maximum (minimum) of $S_n(M,\lambda)$
is obtained when $X=B_iO\bigcap\Gamma$ for some $i$, and the
minimum (maximum) is obtained when $X\equiv B_i$ for some $i$.
\item \label{simplex3}$\lambda>4$ The maximum of $S_n(M,\lambda)$
is obtained when $X=B_iO\bigcap\Gamma$ and the minimum when
$X\equiv B_i$.
\end{enumerate}
\end{theorem}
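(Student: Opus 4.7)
The plan is to reduce the extremal problem on $\Gamma$ to a one-parameter comparison of finitely many critical values, using Lagrange multipliers together with Lemma~\ref{OL}. I would begin by setting up coordinates adapted to the simplex: put $\mathbf a_i = A_i/r$, parametrize $X = R\mathbf u$ with $\mathbf u\in S^{n-1}$, and set $u_i = \langle\mathbf u,\mathbf a_i\rangle$. The regular-simplex identities $\sum_i\mathbf a_i = \mathbf 0$ and $\langle\mathbf a_i,\mathbf a_j\rangle = -1/n$ for $i\neq j$ give $\sum u_i = 0$ and $\sum u_i^2 = (n+1)/n$. Writing $c = R^2+r^2+h$, $\beta = 2Rr$, one has $y_i := |XA_i|^2+h = c - \beta u_i$, so $\sum y_i = (n+1)c$ and $\sum y_i^2 = (n+1)c^2 + \beta^2(n+1)/n$ are both constant on $\Gamma$. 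Consequently $S_n(X,\lambda)$ is constant for $\lambda\in\{0,2,4\}$, handling the even-integer subcase of part~(\ref{simplex2}).

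For $\lambda\notin\{0,2,4\}$, the map $\mathbf u\mapsto(u_i)$ identifies $S^{n-1}$ with the $(n-1)$-sphere $\Sigma = \{u\in\mathbb R^{n+1}:\sum u_i = 0,\ \sum u_i^2 = (n+1)/n\}$, and Lagrange multipliers on $\Sigma$ force the relation $g'(u_i) = \alpha + 2\beta' u_i$ at critical points, where $g(t) = (c-\beta t)^{\lambda/2}$. A direct computation shows that $g'$ is strictly convex or concave on $[-1,1]$ whenever $\lambda\notin\{0,2,4\}$, so the equation $g'(t) = \alpha + 2\beta' t$ has at most two solutions, and therefore the multiset $(u_i)$ takes at most two distinct values. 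The constraints then leave exactly one configuration per $k\in\{1,\dots,n\}$, with $k$ of the $u_i$ equal to $\sqrt{(n+1-k)/(kn)}$ and the remaining $n+1-k$ equal to $-\sqrt{k/(n(n+1-k))}$; geometrically these are the unit vectors pointing to the centroids of the $(k-1)$-faces, with $k=1$ giving $X=B_j$ and $k=n$ giving $X=-B_j$. Writing $P_k,Q_k$ for the two values of $c - \beta u_i$, the critical value in family $k$ is
$$f_\lambda(k) = k P_k^{\lambda/2} + (n+1-k) Q_k^{\lambda/2}.$$

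To identify the global extrema it suffices to order $f_\lambda(1),\dots,f_\lambda(n)$. For $k<k'$ the difference $D_{k,k'}(\lambda):=f_\lambda(k)-f_\lambda(k')$ is a four-term exponential sum in $\lambda$ with the four distinct positive bases $P_k<P_{k'}<Q_k<Q_{k'}$, so by Lemma~\ref{OL} it has at most three real zeros. The moment identities of the first paragraph force $\lambda=0,2,4$ to be zeros, so these account for all roots, and a logarithmic-derivative check verifies that each is simple. The sign of $D_{k,k'}$ on the four intervals $(-\infty,0),(0,2),(2,4),(4,\infty)$ is then fixed by asymptotics: as $\lambda\to-\infty$ the summand $kP_k^{\lambda/2}$ with smallest base and positive coefficient dominates, while as $\lambda\to+\infty$ the summand $-(n+1-k')Q_{k'}^{\lambda/2}$ with largest base and negative coefficient dominates. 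Alternation then gives the sign pattern $+,-,+,-$, so $f_\lambda(k)$ is strictly decreasing in $k$ on $(-\infty,0)\cup(2,4)$ and strictly increasing on $(0,2)\cup(4,\infty)$. The global extrema of $S_n$ on $\Gamma$ are therefore attained exactly at the vertices $B_i$ ($k=1$) and their antipodes $-B_i$ ($k=n$), in the precise max/min pattern asserted in parts~(\ref{simplex1})--(\ref{simplex3}).

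The most delicate step is confirming simplicity of the three common roots of $D_{k,k'}$, so that the sign genuinely changes at each. This reduces to showing that the elementary expressions $\sum_i c_i^{j}\log c_i$ with $(c_1,c_2)=(P_k,Q_k)$ weighted by $(k,n+1-k)$, for $j=0,1,2$, are strictly monotone in $k$—a one-variable calculus check but one that cannot be skipped. The remaining ingredients, namely the moment identities, the asymptotic sign analysis, and the geometric identification of the critical-point families with face centroids, are routine once the Lagrange framework is in place.
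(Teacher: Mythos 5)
Your proof is correct, but it follows a genuinely different route from the paper's. The paper proves Theorem~\ref{simplex} by induction on the dimension: it slices $\Gamma$ by hyperplanes parallel to faces (or perpendicular to $OA_i$), projects onto lower-dimensional regular simplices, invokes the planar Theorem~\ref{OT} as the base case, uses Lemma~\ref{hcon} to propagate constancy across the whole sphere, and finishes the extremal cases with a separate geometric lemma identifying which points of $\Gamma$ project compatibly into every face hyperplane. You instead work directly in $\mathbb R^n$: the moment identities $\sum y_i=(n+1)c$ and $\sum y_i^2=\mathrm{const}$ dispose of $\lambda=0,2,4$ immediately, Lagrange multipliers on $\Sigma$ plus the strict convexity/concavity of $g'$ (i.e., $g'''\neq 0$ exactly when $\lambda\notin\{0,2,4\}$, using $c-\beta t\ge (R-r)^2+h>0$) classify all critical points as normalized face centroids, and Lemma~\ref{OL} applied to the four-base exponential sum $D_{k,k'}$ pins down the sign pattern on the four intervals. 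This buys a self-contained, induction-free argument that also locates every critical point (the intermediate $k$ give saddles), and it is arguably tighter than the paper's treatment, whose inner lemma and final ``the conclusion follows easily'' are left quite terse; what it gives up is the paper's reusable slicing machinery, which the authors deploy again for the cross-polytope, cube, dodecahedron and icosahedron. One small correction to your closing paragraph: the simplicity of the roots $0,2,4$ of $D_{k,k'}$ is not a delicate extra step. Lemma~\ref{OL} bounds the number of roots \emph{counted with multiplicity} by three, the four bases $P_k<P_{k'}<Q_k<Q_{k'}$ are distinct and the four coefficients nonzero so $D_{k,k'}\not\equiv 0$, and you have already exhibited three distinct roots; hence each is automatically simple and the sign must change at each, with no logarithmic-derivative or monotonicity check required.
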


Previous work on this problem has been done by Stolarsky in
\cite{Pacific2}, who obtains partial characterization of the
extremal values for $\lambda\in(0;2)$ in the case when $\Gamma$ is
the sphere circumscribed around the simplex.

The results obtained for the regular simplex are very similar to
those obtained for the equilateral triangle.

\begin{proof}
We shall first prove part \ref{simplex1} of the theorem. We shall
use induction on the dimension of the simplex. For the planar case
we know that this is true. And that $0,2,4$ are the only powers
for which $S_n(X,\lambda)$ is constant.

Assume now that $A_1,\ldots,A_{n+1}$ are vertices of a regular
simplex in $\Bbb R^n$ and $\Gamma$ a sphere, concentric to the
sphere circumscribed around the given simplex. Take now a vertex
$A_i$ and a hyperplane $\pi$, $\dim\pi=n-1$ such that $\pi$ is
perpendicular to $OA_i$ and $\pi$ intersects $\Gamma$ in such a
way that $\dim\pi\bigcap\Gamma=\Gamma^{'}=n-2$. This is possible
as the sphere is a compact differentiable smooth manifold. Now we
have that
$S_n(X,\lambda)=\sum_{j=1}^{n+1}(|MA_j'|^2+h_j|)^{\lambda/2}$ for
$M\in\Gamma'$ where $A_j'$ is the projection of $A_j$ in the plane
$\pi$ and $h_j$ is the distance between $A_j$ and $\pi$. We have
now that $A_i$ projects to the center of $\Gamma'$, hence $MA_i$
is constant for $M\in\Gamma'$. Moreover as the polytope is a
regular simplex then $h_m=h_n$, $m,n\neq i$ and $A_j^{'}$, $j\neq
i$ are the vertices of a regular $(n-1)$-dimensional simplex.
Hence by the induction hypothesis $S_n(X,\lambda)$ is constant for
$M\in\Gamma^{'}$ and $\lambda=0,2,4$. Now from Lemma \ref{hcon} it
follows that $S_n(M,\lambda)$ is independent on the position of
$M$ on $\Gamma$ when $\lambda=0,2,4$. From the induction
hypothesis it also follows that this are the only values for
$\lambda$ with this property.

For the other cases of Theorem \ref{simplex} we again use induction.

Let $\lambda\neq  0,2,4$. Then as $\Gamma$ is a compact set and
$S_n(M,\lambda)$ is a continuous function, then there is a maximum
of $S_n(M,\lambda)$. Assume that this maximum is achieved at a
point $N$. Now consider the hyperplane $\pi_i$, which contains the
$(n-1)$-dimensional simplex obtained by the vertices
$A_1,\ldots,A_{i-1},A_{i+1},\ldots,A_n$. Now let $\pi_i'$ be the
hyperplane, parallel to $\pi_i$, which contains $N$. We consider
the extremal values of $S_n(M,\lambda)|_{M\in\pi_i'}$.

We have that $|MA_i|$ is constant for $M\in\pi_i'$, so we need
only consider
$$\sum_{j=1,j\neq i}^{n+1}(|MA_j|^2+h)^{\lambda/2}=\sum_{j=1,j\neq i}^{n+1}
(|M'A_j|^2+h_j^2+h)^{\lambda/2}=S_{n-1}(M',\lambda),$$
where $M'$ is the projection of $M$ in $\pi_i$ and $h_m=h_n,
m,n\neq i$. We have that $M'\in\Gamma'$, where $\Gamma'$ is the
projection of $\Gamma\bigcap\pi_i'$ in $\pi_i$. Now by the
induction hypothesis we have that if $O'$ is the projection of $O$
in the hyperplane $\pi_i$ then if the maximum of
$S_{n-1}(M',\lambda)$ is obtained at $M_i$ then
\begin{enumerate}
\item $\lambda<0$ $M_i=\overrightarrow{O_iA_j}\bigcap\Gamma'$ for
some $j\neq i$. In this case if $h_j^2+h=0$, then $\Gamma$ is the
sphere circumscribed around the regular simplex and the sum
$S_n(M,\lambda)$ is not bounded when $\lambda<0$.

\item $\lambda\in(0;2)$ $M_i=\overrightarrow{A_jO_i}\bigcap\Gamma'$ for some $j\neq i$.
\item $\lambda\in(2;4)$ $M_i=\overrightarrow{O_iA_j}\bigcap\Gamma'$ for some $j\neq i$.
\item $\lambda>4$ $M_i=\overrightarrow{A_jO_i}\bigcap\Gamma'$ for some $j\neq i$.
\end{enumerate}

Now as the global maximum of $S_n(M,\lambda)$ is obtained at $N$,
hence we have that the maximum of $S_n(M,\lambda)|_{M\in\pi_i'}$
is also obtained at $N$, hence we have that the projection of $N$
in the hyperplane $\pi_i$ coincides with $M_i$. This remains true
for all planes $\pi_i$.

It remains only to prove that the only points for the projections
of which this holds are the aforementioned in the Theorem. We
shall prove the following Lemma
\begin{lemma}
Let $A_1\ldots A_{n+1}$ be a regular simplex in $\mathbb{R}^n$.
Let $\Gamma$ be a sphere circumscribed around the simplex. For
every $i=1,\ldots,n+1$ let $O_i$ be the projection of the center
of $\Gamma$ in the hyperplane $\pi_i$ containing the face of the
simplex, which does not contain the vertex $A_i$. Let $X$ be a
point on $\Gamma$. Let $X_i$ be the projection of $X$ in $\pi_i$.
If for every $i$ $O_iX_i$ is perpendicular to so some two
codimensional face of the simplex, then either $X\equiv B_i$ for
some $i$ or $X\equiv \overrightarrow{B_iO}\bigcap\Gamma$.
\end{lemma}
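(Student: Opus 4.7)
My plan is to translate the perpendicularity condition into linear constraints on $X$ and then intersect the resulting $2$-planes across all $i$. Place $O$ at the origin so $\sum_p A_p=0$, $|A_p|^2=R^2$, and $\langle A_p,A_q\rangle=-R^2/n$ for $p\neq q$. Then $O_i=-A_i/n$, $\pi_i=\{y:\langle y,A_i\rangle=-R^2/n\}$, and a direct computation gives $O_iX_i=X-(\langle X,A_i\rangle/R^2)A_i\in A_i^\perp$. For a two-codimensional face $F_{jk}$ (the face obtained by removing $A_j$ and $A_k$), the regularity of the simplex implies $\langle A_j,A_m-A_p\rangle=0$ whenever $m,p\notin\{j,k\}$, so the orthogonal complement in $\mathbb{R}^n$ of the affine span of $F_{jk}$ equals $\mathrm{span}\{A_j,A_k\}$.

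Consequently, the condition $O_iX_i\perp F_{jk}$ is equivalent to $O_iX_i\in\mathrm{span}\{A_j,A_k\}\cap A_i^\perp$. A short calculation splits this into two cases. If $i\notin\{j,k\}$, imposing $\langle\alpha A_j+\beta A_k,A_i\rangle=0$ forces $\alpha=-\beta$, so the intersection is $\mathrm{span}\{A_j-A_k\}$ and hence $X\in\mathrm{span}\{A_i,A_j-A_k\}$. If $i\in\{j,k\}$, say $i=j$, the intersection is $\mathrm{span}\{A_k+A_i/n\}$ and hence $X\in\mathrm{span}\{A_i,A_k\}$. Thus for each $i$ the hypothesis places $X$ inside one $2$-plane from a finite family, indexed by a choice of one or two auxiliary vertices distinct from $A_i$.

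To finish, I would intersect these $2$-planes across all $i=1,\dots,n+1$. For two distinct indices $i_1\neq i_2$ the corresponding $2$-planes lie in a subspace spanned by at most four vertices $A_p$; since any $n$ of the $n+1$ vertices are linearly independent in $\mathbb{R}^n$, two such distinct planes meet in at most a one-dimensional subspace. When the two label-sets share a common vertex $A_p$, the intersection is exactly $\mathrm{span}\{A_p\}$, so $X\in\mathrm{span}\{A_p\}$; combined with $|X|=R$ this yields $X=\pm A_p$, i.e.\ $X\equiv B_p$ or $X\equiv\overrightarrow{B_pO}\cap\Gamma$. When the label-sets are disjoint the intersection is trivial except in low-dimensional degeneracies arising from the relation $\sum_p A_p=0$, and one eliminates these by invoking the constraint from a third index $i_3$.

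The main obstacle will be the bookkeeping of mixed choices---one index yielding a plane of the form $\mathrm{span}\{A_i,A_j-A_k\}$ and another of the form $\mathrm{span}\{A_{i'},A_{k'}\}$---together with the small-$n$ degeneracies caused by the single linear relation among the vertices. The key leverage is that any two distinct $2$-planes from the above list must meet along a vertex direction $\mathrm{span}\{A_p\}$, and chaining this observation across all $n+1$ choices of $i$ exhausts the vertex set, leaving only the vertex-antipode axes as possible locations for $X$.
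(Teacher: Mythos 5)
Your opening computations are correct, and in fact your reduction is the paper's argument rewritten in coordinates: $O_iX_i\perp F_{jk}$ says exactly that $X$ is equidistant from the $n-1$ vertices of $F_{jk}$ (i.e.\ $\langle X,A_p-A_q\rangle=0$ for $p,q\notin\{j,k\}$, i.e.\ $X\in\mathrm{span}\{A_j,A_k\}$), which is the equidistance observation the paper chains over two indices. With the faces the paper actually intends --- codimension-two faces of the facet opposite $A_i$, so $i\in\{j,k\}$, which is the only case its proof uses --- your intersection step does close for $n\ge4$: two constraints $X\in\mathrm{span}\{A_{i_1},A_{k_1}\}$ and $X\in\mathrm{span}\{A_{i_2},A_{k_2}\}$ with disjoint label pairs force $X=0$ (any four vertices are independent), and with a common label $p$ they force $X\in\mathrm{span}\{A_p\}$. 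Up to language, that is the paper's proof.

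The plan for "finishing", however, rests on two assertions that are false. First, your case $i\notin\{j,k\}$ must be excluded, because with it the statement you are proving is false for every $n\ge3$: the point $X=R(A_2-A_3)/|A_2-A_3|$ has $O_iX_i\in\mathrm{span}\{A_2,A_3\}$, hence $O_iX_i\perp F_{23}$, for \emph{every} $i$, yet $X$ is neither a vertex nor an antipode. Correspondingly, your "key leverage" --- that any two distinct planes from your list meet in a vertex direction --- fails: $\mathrm{span}\{A_1,A_2-A_3\}\cap\mathrm{span}\{A_4,A_2-A_3\}=\mathrm{span}\{A_2-A_3\}$. Second, the "small-$n$ degeneracy" cannot be eliminated by invoking a third index. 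For $n=3$ the relation $\sum_pA_p=0$ gives $\mathrm{span}\{A_1,A_2\}\cap\mathrm{span}\{A_3,A_4\}=\mathrm{span}\{A_1+A_2\}$, and the corresponding point $X=R(A_1+A_2)/|A_1+A_2|$ of $\Gamma$ over the midpoint of the edge $A_1A_2$ satisfies the hypothesis for all four indices (for each $i$ both $O_i$ and $X_i$ are equidistant from $A_3,A_4$, resp.\ from $A_1,A_2$, so $O_iX_i$ is perpendicular to that edge), while the conclusion fails. So the obstruction you defer to bookkeeping is a genuine counterexample to the lemma as stated in dimension three; the paper's own proof also only works for $n\ge4$ (it needs the two $(n-1)$-element vertex sets to overlap) and dismisses $\mathbb{R}^3$ as "easy to verify", which is precisely where the statement needs repair rather than verification.
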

\begin{proof}
It is easy to verify this in $\mathbb{R}^3$. Assume now that the
dimensional is greater than $3$. We have that $O_1X_1$ is
perpendicular to so some $n-2$-dimensional face of the simplex,
say the face $A_3\ldots A_{n-1}$. Now it easily follows that
$|X_1A_i|=const$, $i=3,\ldots,n+1$. Hence $|XA_i|=const$,
$i=3,\ldots,n+1$. Making the same considerations in $\pi_3$ we
obtain that $X$ is equidistant from some other $n-2$ face of the
simplex, different than $A_2\ldots A_{n+1}$ and hence we have that
$X$ is equidistant either from all the vertices of $A_1\ldots
A_{n+1}$ and thus $X\equiv O$, but $X\in\Gamma$ and then $X$ is
equidistant from $n-1$ vertices of the simplex. The conclusion
follows easily.
\end{proof}

>From this Lemma and the induction hypothesis the proof of Theorem \ref{simplex} follows.

The proof of the minimality part of Theorem \ref{simplex} is analogous.
\end{proof}

In the light of the planar case one may pose the following

\begin{conjecture}
Let $A_1,\ldots,A_{n+1}$ be points in $\mathbb{R}^n$. Assume that
there is a sphere $\Gamma$ such that
$\sum_{i=1}^{n+1}|MA_i|^{\lambda}$ is independent of the position
of $M$ on $\Gamma$ for $\lambda=2,4$. Then $A_1\ldots A_{n+1}$ are
vertices of a regular simplex.
\end{conjecture}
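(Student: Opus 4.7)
The plan is to place the origin at the center of $\Gamma$ and extract three algebraic moment identities from the $\lambda=2$ and $\lambda=4$ hypotheses, then show these identities force a regular simplex.

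First, for $M$ on $\Gamma$ of radius $R$, expand $|MA_i|^2=R^2+|A_i|^2-2M\cdot A_i$. Summing over $i$, the $\lambda=2$ hypothesis asserts that the linear map $M\mapsto M\cdot\sum_i A_i$ is constant on $\Gamma$, and since a nontrivial linear form is never constant on a sphere of positive radius, this forces $\sum_{i=1}^{n+1}A_i=0$.

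Next, for $\lambda=4$, I would square $|MA_i|^2$ and sum; the relation $\sum A_i=0$ kills one cross term, and the nonconstant part of $\sum_i|MA_i|^4$ reduces to $4M^{T}QM-4M\cdot w$, where $Q=\sum_i A_iA_i^{T}$ and $w=\sum_i|A_i|^2A_i$. Comparing the values at antipodal points $M$ and $-M$ separates the pieces: the linear term must vanish, yielding $w=0$; and then the quadratic form $M^{T}QM$ is constant on $|M|=R$, which classically forces $Q=cI$ for some scalar $c\ge 0$.

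It remains to deduce a regular simplex from the three identities $\sum A_i=0$, $\sum|A_i|^2A_i=0$, and $\sum A_iA_i^{T}=cI$. Since $\ker Q=(\operatorname{span}\{A_i\})^{\perp}$, the identity $Q=cI$ forces $c>0$ and the $A_i$ to span $\mathbb{R}^n$ (the degenerate case $c=0$ collapses everything to the origin). With $n+1$ spanning vectors satisfying $\sum A_i=0$, the null space of the matrix $[A_1|\cdots|A_{n+1}]$ is one-dimensional and spanned by $(1,\ldots,1)$; hence the coefficient vector $(|A_1|^2,\ldots,|A_{n+1}|^2)$ of the relation $\sum|A_i|^2A_i=0$ must also be a multiple of $(1,\ldots,1)$, so all $|A_i|^2$ equal a common value $r^2$, and taking the trace of $Q=cI$ pins down $c=(n+1)r^2/n$.

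The main obstacle is the last step. Applying $Q=cI$ to $A_j$ and separating the $i=j$ term yields $\sum_{i\ne j}\bigl(A_i\cdot A_j+r^2/n\bigr)A_i=0$. To conclude $A_i\cdot A_j=-r^2/n$ for every $i\ne j$, I need that $\{A_i:i\ne j\}$ is linearly independent. This will follow from the one-dimensionality of $\ker[A_1|\cdots|A_{n+1}]$: any nontrivial linear dependence among $\{A_i:i\ne j\}$ would extend (by setting the $j$-th coefficient to zero) to a null vector with $j$-th coordinate zero, linearly independent from $(1,\ldots,1)$, which is impossible. Once all inner products $A_i\cdot A_j$ with $i\ne j$ are equal, the pairwise distances $|A_i-A_j|^2=2r^2(1+1/n)$ all coincide, so $A_1,\ldots,A_{n+1}$ are the vertices of a regular simplex.
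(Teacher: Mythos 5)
Your argument is correct, and it does substantially more than the paper: the statement you are proving is left as a \emph{conjecture} in the paper, which only establishes the cases $n=2$ (via the complex--polynomial method on the circle) and $n=3$ (by cutting $\Gamma$ with a plane perpendicular to $OA_1$, reducing to the planar complex-number argument for the projected triangle, and then a separate argument to equalize the heights $h_i$). Your route is a direct moment computation: expanding $|MA_i|^2=R^2+|A_i|^2-2M\cdot A_i$ and using constancy for $\lambda=2,4$ to extract $\sum A_i=0$, $\sum|A_i|^2A_i=0$ and $\sum A_iA_i^{T}=cI$, and then a clean linear-algebra step (one-dimensionality of the kernel of $[A_1|\cdots|A_{n+1}]$, spanned by $(1,\dots,1)$) to force equal norms, equal pairwise inner products $A_i\cdot A_j=-r^2/n$, and hence equal pairwise distances. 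Each step checks out: the antipodal comparison legitimately kills the linear part since $\Gamma$ is centered at your origin; a symmetric quadratic form constant on a sphere is indeed a multiple of the identity; and the linear independence of $\{A_i\}_{i\neq j}$ follows exactly as you say from the kernel being spanned by $(1,\dots,1)$. What your approach buys is the full $n$-dimensional conjecture in a few lines, with no induction on dimension and no projection machinery. The only caveat is one the conjecture itself overlooks: the configuration with all $A_i$ at the center of $\Gamma$ (your $c=0$ case) satisfies the hypotheses as literally stated, since the statement, unlike the paper's other conjectures, omits the word ``different''; your proof correctly identifies this as the unique degenerate exception, so under the intended distinctness assumption the result is complete.
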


We have already shown that this is true for $n=2$. It can also be
proved for $n=3$.

\begin{proposition}
Let $A_1,A_2,A_3,A_4$ be points in $\mathbb{R}^3$. Assume that
there is a sphere $\Gamma$ such that
$\sum_{i=1}^4|MA_i|^{\lambda}$ is independent of the position of
$M$ on $\Gamma$ for $\lambda=2,4$. Then $A_1A_2A_3A_4$ is a
regular tetrahedron.
\end{proposition}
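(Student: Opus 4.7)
The plan is to work in vectors. Place the center of $\Gamma$ at the origin $O$, denote its radius by $R$, and let $a_i$ be the position vector of $A_i$ and $m$ that of the variable point $M$, so $|m|^2=R^2$ on $\Gamma$. Expanding $|MA_i|^2=R^2+|a_i|^2-2\,m\cdot a_i$ and summing, the only $m$-dependent part of $\sum_{i=1}^4|MA_i|^2$ is the linear term $-2\,m\cdot\sum_i a_i$. Since a nonzero linear functional on $\mathbb{R}^3$ is never constant on a $2$-sphere, the $\lambda=2$ hypothesis forces $\sum_{i=1}^4 a_i=0$, so the centroid of the four points coincides with the center of $\Gamma$.

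Next I square and sum. Using $\sum a_i=0$ to cancel the cross term, the identity $|MA_i|^4=(R^2+|a_i|^2-2\,m\cdot a_i)^2$ yields
\[
\sum_{i=1}^4 |MA_i|^4 = C + 4\,m^{T}Qm - 4\,m\cdot v,
\]
where $C$ is a constant, $Q=\sum_i a_ia_i^{T}$, and $v=\sum_i|a_i|^2 a_i$. The linear piece can be constant on the sphere $|m|=R$ only if $v=0$, and the quadratic form $m^{T}Qm$ is constant there only if $Q$ is a scalar multiple of the identity, say $Q=\mu I$; indeed, diagonalising $Q$, if its eigenvalues were not all equal, then the values of $m^{T}Qm$ at unit eigenvectors of its largest and smallest eigenvalues would differ. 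Taking traces gives $\mu=\tfrac13\sum|a_i|^2$, which is positive unless all $A_i$ coincide with $O$, a degenerate case that I discard.

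I then pass to the Gram matrix. Set $B=[a_1\;a_2\;a_3\;a_4]$, a $3\times4$ matrix, and $G=B^{T}B$. The condition $BB^{T}=\mu I$ yields $G^2=B^{T}(BB^{T})B=\mu G$, so every eigenvalue of $G$ lies in $\{0,\mu\}$; since $\mathrm{rk}\,G=\mathrm{rk}\,B=\mathrm{rk}\,BB^{T}=3$, exactly one eigenvalue is $0$, and $B\mathbf 1=0$ pins its eigenvector to $\mathbf 1$. Hence $G=\mu\bigl(I-\tfrac14 J\bigr)$, where $J$ is the $4\times4$ all-ones matrix. Reading off entries, $|a_i|^2=3\mu/4$ for every $i$ and $a_i\cdot a_j=-\mu/4$ for $i\neq j$, so $|A_iA_j|^2=|a_i|^2+|a_j|^2-2\,a_i\cdot a_j=2\mu$ for every pair; all six edges are equal, and $A_1A_2A_3A_4$ is a regular tetrahedron. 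The auxiliary condition $v=0$ is then automatic, consistent with the derivation. The only slightly non-trivial step is the reduction $Q=\mu I$, a one-line spectral argument; everything else is expansion of squares and short linear algebra.
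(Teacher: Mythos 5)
Your proof is correct, and it takes a genuinely different route from the paper's. The paper argues by dimension reduction: it cuts $\Gamma$ with a plane perpendicular to $OA_1$, projects $A_2,A_3,A_4$ into that plane, runs the planar complex-polynomial argument to conclude that the projections form an equilateral triangle, deduces $h_2^2=h_3^2=h_4^2$, uses a perturbation of the cutting plane to force $A_2,A_3,A_4$ into a single half-space (hence into one plane, so $A_2A_3A_4$ is equilateral), and then repeats the whole argument for each face. Your argument stays in $\mathbb{R}^3$ and is purely linear-algebraic: $\lambda=2$ gives $\sum_i a_i=0$, $\lambda=4$ gives $\sum_i a_ia_i^{T}=\mu I$, and the Gram-matrix identity $G=\mu\bigl(I-\tfrac14 J\bigr)$ (with $J$ the all-ones matrix) immediately yields six equal edges. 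This is shorter, avoids both the complex-number machinery and the somewhat delicate half-space/perturbation step, and --- more significantly --- it generalizes verbatim: for $n+1$ points in $\mathbb{R}^n$ the same computation gives $G=\mu\bigl(I-\tfrac{1}{n+1}J\bigr)$, which would settle the paper's regular-simplex rigidity conjecture (stated just before this proposition) in every dimension, not only $n=3$. One small step you should make explicit: you treat the linear term $-4\,m\cdot v$ and the quadratic term $4\,m^{T}Qm$ as if each had to be constant on $\Gamma$ separately; this follows by comparing the values at $m$ and $-m$, since the odd (linear) part must then vanish identically and the even (quadratic) part must be constant, but as written it is asserted rather than argued.
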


\begin{proof}
We shall present the proof here. Consider a plane $\pi$, such that
$\pi\bot OA_1$ and $\pi\bigcap\Gamma=\omega$, a circle. Now we
have that for $M\in\omega$ $|MA_1|$ is constant, so we need only
consider the sum
$|MA_2|^{\lambda}+|MA_3|^{\lambda}+|MA_4|^{\lambda}$. Let $A_i'$
be the projection of the point $A_i$ in the plane $\pi$. we have
that
$|MA_2|^2+|MA_3|^2+|MA_4|^2=|MA_2'|^2+h_2^2+|MA_3'|^2+h_3^2+|MA_4'|^2+h_4^2$,
hence $|MA_2'|^2+|MA_3'|^2+|MA_4'|^2$ is also constant for
$M\in\omega$. We also have that
$|MA_2|^4+|MA_3|^4+|MA_4|^4=|MA_2'|^4+|MA_3'|^4+|MA_4'|^4+2|MA_2'|^2h_2^2+
2|MA_3'|^2h_3^2+2|MA_4'|^2h_4^2+h_2^4+h_3^4+h_4^4$.
Now consider $\pi$ to be the complex plane with origin-the center
of $\omega$. Assign to the points $A_i'$ the complex numbers
$\alpha_i$. Using the same approach, as in the proof of the planar
case we obtain that $\sum\alpha_i=0$ and $\sum\alpha_i^2=0$, from
where it follows that $\alpha_i=z\xi^i$, where $\xi$ is the third
root of unity. From here it follows that $A_i'$ are the vertices
of an equilateral triangle, centered at the origin of the complex
plane. Now again by the considerations at the beginning of this
paper it follows that $|MA_2'|^4+|MA_3'|^4+|MA_4'|$ is constant
for $M\in\omega$, hence
$2|MA_2'|^2h_2^2+2|MA_3'|^2h_3^2+2|MA_4'|^2h_4$ is constant for
$M\in\omega$. It follows that $\sum_{i=2}^42z\xi^ih_i^2=0$. It is
now easy to see that $h_2^2=h_3^2=h_4^2$ and hence $h_1=h_2=h_3$.
Assume that two of the points $A_i$ belong to different
halfspaces, divided by the plane $\pi$, say $A_2$ and $A_3$. Then
considering a plane $\pi'$, such that the distance between $\pi$
and $\pi'$ $\epsilon$, is such that $\pi'$ still divides the
points $A_2$ and $A_3$ and $\pi'\bigcap\Gamma=\omega'$, still a
circle. Then it follows that $h_2+\epsilon=h_3-\epsilon$, which is
impossible. Hence all of the points $A_i$, $i=2,3,4$ belong to one
halfspace, and thus to one plane. From here we get that
$A_2A_3A_4$ is an equilateral triangle. With the same
considerations it follows that every three of the vertices of
$A_1A_2A_3A_4$ are vertices of an equilateral triangle, hence it
is a regular tetrahedron.
\end{proof}

\subsection{Cross-polytope}
Now we begin with the consideration of the cross-polytope. The
vertices of this cross-polytope are of the form
$(\pm1,0,\ldots,0)$ and all permutations. It can also be
considered as the unit sphere under the $l_1$ metric. It is the
dual of the hypercube.

Let $A_1,\ldots, A_{2n}$ be the vertices of the cross-polytope
with $A_i$ be the all-zero vector, with one in the $i-$th position
for $i=1,\ldots, n$ and $A_i=-A_{i-n}$ for $i>n$. Also let
$\Gamma$ be a sphere, concentric to the sphere circumscribed
around the cross-polytope.

We are interested in the following question: For which points
$M\in\Gamma$ are the extremal values of
\begin{equation}\label{crossp}\sum_{i=1}^{2n}|MA_i|^{\lambda}=C_n(M,\lambda)\end{equation}
are achieved.

This question has been previously considered in \cite{Pacific2},
where full characterization of \eqref{crossp} when $\Gamma$ is the
sphere circumscribed around the cross-polytope and
$\lambda\in(0;2)$.

Put $B_i=OA_i\bigcap\Gamma$.

We shall prove the following

\begin{theorem}
\label{teoremacp}
\begin{enumerate}
\item $\lambda<0$. The maximum of $C_n(M,\lambda)$ is obtained
when $M\equiv B_i$ for some $i$. In the case when $\Gamma$ is the
sphere circumscribed around $A_1\ldots A_{2n}$ the sum
$C_n(M,\lambda)$ is not bounded. The minimum is obtained for some
vector $B=(\sum_{i=1}^n \pm B_i)/\sqrt N$ for any choice of plus
and minus signs. Those are the points obtained by the intersection
of the perpendicular from $O$ to some face of the cross-polytope
and $\Gamma$. \item $\lambda\in[0;6]$. If $\lambda$ is an even
number then $C_n(M,\lambda)$ is independent of the position of $M$
on $\Gamma$. Otherwise let $2m<\lambda<2m+2$. If $m$ is an even
(odd) integer, then the maximum (minimum) of $C_n(M,\lambda)$ is
obtained when $M$ coincides with the any of the points $B$,
defined as above, and the minimum (maximum) is achieved when
$M\equiv B_i$ for some $i$. \item $\lambda>6$ Then the maximum of
$C_n(M,\lambda)$ is obtained when $M$ coincides with any of the
vertices $B_i$, defined the minimum is obtained when $M\equiv B$
where $B$ is defined as above.
\end{enumerate}
\end{theorem}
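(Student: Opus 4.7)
The plan is to adapt the argument of Theorem~\ref{simplex}: prove the constancy at $\lambda\in\{0,2,4,6\}$ and the full extremal characterization simultaneously, by induction on the dimension~$n$, after strengthening the inductive hypothesis to allow an $h$-offset, i.e.\ replacing $|MA_i|^\lambda$ by $(|MA_i|^2+h^2)^{\lambda/2}$ for arbitrary $h\ge 0$. The base case $n=2$ is the square, viewed as the regular $4$-gon, for which Theorem~\ref{OT} already supplies the complete characterization, including constancy at the even exponents $0,2,4,6$ (here the bound $\lambda<2n=8$ from Theorem~\ref{OT} is precisely what distinguishes the cross-polytope threshold $6$ from the simplex threshold $4$).

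For the inductive step, the key move is to slice by a hyperplane $\pi$ perpendicular to some axis $OA_k$. The two antipodal vertices $A_k$ and $A_{k+n}$ lie on that axis, so their feet of perpendicular to $\pi$ coincide with the center of $\Gamma\cap\pi$; hence $|MA_k|$ and $|MA_{k+n}|$ are constant on $\Gamma\cap\pi$. The remaining $2(n-1)$ vertices project onto a cross-polytope in $\pi$ at a common height from $\pi$, reducing the problem on the slice, up to additive constants, to an $(n-1)$-dimensional cross-polytope problem with enlarged $h$-offset. For constancy, one binomially expands $(|MA_i'|^2+h'^2)^s$ and invokes the inductive hypothesis for $s=1,2,3$; then Lemma~\ref{hcon} propagates constancy from one slice to all of $\Gamma$.

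For the extremal description, let $N=(n_1,\ldots,n_n)\in\Gamma$ realize the global maximum (the minimum case is dual). For each $k$ I would slice $\Gamma$ through $N$ perpendicular to $OA_k$; the inductive hypothesis then places the slice-maximum at an $(n-1)$-dimensional $B_i$-type point -- which translates to ``exactly one coordinate among $\{n_j:j\neq k\}$ is nonzero'' -- or at a $B$-type point, which translates to ``$|n_j|$ is constant for $j\neq k$'', depending on the range of $\lambda$. Intersecting these conditions over all $k$ (which uses $n\ge 3$; $n=2$ is the base case) forces either $N=\pm Re_k$, a $B_i$-type point of the $n$-dimensional polytope, or $|n_1|=\cdots=|n_n|=R/\sqrt n$, i.e.\ $N=(R/\sqrt n)\sum\pm e_j$, a $B$-type point. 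The alternation in the three parts of the statement matches the alternation inherited in one lower dimension.

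To close, I would verify that $\{0,2,4,6\}$ are the only constancy values by writing the difference $C_n(B_1,\lambda)-C_n(B,\lambda)$ as a sum of finitely many exponentials $d^\lambda$ and invoking Lemma~\ref{OL}: the at-most-five distinct distances involved bound the number of real zeros of the difference by four, and since the four zeros $0,2,4,6$ are already exhibited, no other constancy values can exist. The main obstacle is the extremal pinning step: checking carefully that the slice-wise constraints from the inductive hypothesis actually intersect down to the claimed finite set of points in every $\lambda$-regime, and that the parity bookkeeping of part~(2) propagates cleanly from dimension $n-1$ to dimension $n$.
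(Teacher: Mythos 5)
Your proposal matches the paper's proof essentially step for step: the same induction on dimension with the $h$-offset strengthening, the same slicing by a hyperplane perpendicular to an axis $OA_k$ (so the two antipodal vertices contribute constants and the rest reduce to an $(n-1)$-dimensional cross-polytope with enlarged offset), Lemma~\ref{hcon} to propagate constancy from slices to all of $\Gamma$, and the same intersection of slice-wise constraints over several $k$ (using $n\ge3$) to pin the extremal point. The only divergence is your closing argument that $\{0,2,4,6\}$ are the only constancy values, via counting the at most five distinct distances from $B_1$ and $B$ and invoking Lemma~\ref{OL}, whereas the paper inherits exclusivity directly from the induction hypothesis; both routes are sound (your counting device is the one the paper itself uses for the dodecahedron).
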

\begin{proof}

As in the case of the regular simplex we shall actually consider
the more general function
\begin{equation}\label{crossp2}C_n(M,\lambda)=\sum_{i=1}^{2n}
(|MA_i|^2+h)^{\lambda/2},\end{equation}
where $h$ is some fixed positive real number and prove that the
above theorem hods. We proceed by induction on the dimension of
the polytope.

We have already considered the desired result in the planar case.
We shall prove that for $\lambda=0,2,4,6$ the sum eq\ref{crossp2}
is independent on the position of $M$ on $\Gamma$ and that these
are all the powers with that property. We have already verified
this in the planar case and found it to be true. Now assume that
it is true for all cross-polytopes of dimension smaller than $n$.

Now consider the $n$-dimensional cross-polytope. Consider a
hyperplane $\pi_i$, perpendicular to $A_iA_{n+i}$, which
intersects $\Gamma$ such that the intersection is a sphere
$\Gamma'$ of dimension $n-2$. We shall prove that \eqref{crossp2}
is independent on the position of $M\in\Gamma'$ for
$\lambda=0,2,4,6$ and these are the only powers with that
property. We have that for $M\in\Gamma'$ $|MA_i|$ and $|MA_{n+i}|$
is constant, so we need only consider the function
\begin{equation}\sum_{i=j,j\neq i, n+i}^{2n}(|MA_j|^2+h)^{\lambda/2}=
\sum_{j=1,i\neq
i,n+i}^{2n}(|M'A_j|^2+H^2+h)^{\lambda/2}\label{crossp3},\end{equation}
where $M'$ is the projection of $M$ in the hyperplane spanned by
the vectors $A_i$, $i\neq i,n+i$. Now the projection of $\Gamma'$
in that hyperplane is a sphere circumscribed around the
$n-1$-dimensional cross-polytope with vertices $A_i$, $i\neq
i,n+i$, hence by the induction hypothesis \eqref{crossp3} is
independent o the position of $M$ on $\Gamma'$ for
$\lambda=0,2,4,6$ and those are the only such powers. Hence we
have proved the desired result for $M\in\Gamma'$, now considering
another plane $\pi_j$ again intersecting $\Gamma$ in an
$(n-2)$-dimensional sphere $\Gamma'_j$ and perpendicular to
$A_jA_{n+j}$. Again with the same argument we obtain that the sum
\eqref{crossp2} is independent on the position of $M$ on
$\Gamma'_j$ for $\lambda=0,2,4,6$ and those are the only powers
with this property.. Now using Lemma \ref{hcon} it follows that
this result can be extended to the whole sphere $\Gamma$.

We now proceed with the consideration of the extremal points. We
shall only consider the case or $\lambda<0$ as the desired results
in other cases of Theorem~\ref{teoremacp} can be obtained in the
same manner.

We shall apply the same approach we used with the regular simplex.
We again use induction on the dimension of the cross-polytope. Now
let $\lambda<0$. Assume that the minimum of \eqref{crossp} is
obtained at some point $N$ on $\Gamma$. Take a hyperplane $\pi_i$,
containing $N$ and parallel to the hyperplane $\sigma_i$ spanned
by the vectors $A_j$, $i\neq j, j+n$. Put
$\pi_i\bigcap\Gamma=\Gamma_i$. We have that $\dim\Gamma_i$ is
either zero (in that case $N$ coincides with some vertex of the
cross-polytope) or $\dim\Gamma_i=n-2$. There is at most one
non-parallel hyperplane for which the first case occurs. Take $i$,
such that $\dim\Gamma_i=n-2$. We have that $|MA_i|$ and
$|MA_{i+n}|$ are constant for $M\in\Gamma_i$, so we need only
consider the sum $\sum_{j=1, j\neq i,i+n}^{2n}|MA_j|^{\lambda}$.

Now we consider \begin{equation}\label{eq}\sum_{j=1, j\neq
i,i+n}^{2n} (|M'A_j|^2+H+h)^{\lambda/2},\end{equation} where $H$
is the squared distances between the planes $\pi_i$ and $\sigma_i$
and $M'$ is the projection of $M$ in the plane $\sigma_i$. We have
now that $M'$ belongs to the projection of $\Gamma_i$ in the plane
$\sigma_i$, which is a $(n-2)$-dimensional sphere, concentric to
the sphere circumscribed around the $(n-1)$-dimensional
cross-polytope with vertices $A_j$, $j\neq i,i+n$.

Now as the global minimum of $C_n(M,\lambda)$ is obtained at $N$,
so a local minimum must be obtained at that point, hence $N'$ is
the point for which \eqref{eq} achieves its minimum.

We consider the $n-1$-dimensional subspace, the hyperplane
$\sigma_i$. From the induction hypothesis and the fact that $N'$
is the point for which the sum \eqref{eq} achieves its minimum it
follows that the coordinates of $N'$  in that space are given by
$c\mathbf{v}$, where $c$ is a constant and $\mathbf{v}$ is the
vector, each coordinate of which is $\pm1$. Hence the $j$-th,
$j\neq i$ coordinate of $N$ is $\pm c$ and the $i$-th coordinate
of $N$ is $\pm\sqrt{H}$. Now consider another hyperplane $\pi_j$,
$j\neq i$ defined in the same way as $\pi_i$ (as $n\geq3$ such
plane exists), with analogous considerations it follows that the
$k-$th, $k\neq j$ coordinate of the point $N$ is $\pm s$, where
$s$ is a constant. From here it follows that the coordinates of
$N$ are all $\pm a$, where $a$ is a constant. Taking into
consideration that $N$ lies on the sphere $\Gamma$, the desired
result follows.

Now assume that $\Gamma$ is not the sphere circumscribed around
$A_1\ldots A_{2n}$, as in that case \eqref{crossp2} is not bounded
when $M\to A_i$ for some $i$. Assume that the maximum of
\eqref{crossp2} is achieved at a point $T$. As before we take a
hyperplane $\pi_i$, containing $T$ and parallel to the hyperplane
$\sigma_i$ spanned by the vectors $A_j$, $i\neq j, j+n$. Put
$\pi_i\bigcap\Gamma=\Gamma_i$. We have that $\dim\Gamma_i$ is
either zero-in that case $N$ coincides with some vertex of the
cross-polytope and we have nothing to prove. Assume that this is
not the case, then $\dim\Gamma_i=n-2$. We have that $|MA_i|$ and
$|MA_{i+n}|$ are constant for $M\in\Gamma_i$, so we need only
consider the sum $\sum_{j=1, j\neq i,i+n}^{2n}|MA_j|^{\lambda}$.

As before consider the projection of the point $M$ in the
hyperplane $\sigma_i$, spanned by the vectors $A_j$, $j\neq
i,i+n$. Then again by the induction hypothesis it follows that if
$T$ projects in $T'$, then in the subspace $\sigma_i$ $T'$ has at
most one nonzero coordinate, let it be the $k$-th coordinate. It
follows that $T$ has at most two nonzero coordinates. As $T$
belongs to $\Gamma$, $T$ cannot coincide with the origin of the
coordinate space. If it has one nonzero coordinate, the conclusion
follows. Now assume that it has two nonzero coordinates, say $k$
and $i$. Take now another hyperplane $\pi_j$, $j\neq k,i$, defined
in the same fashion as $\pi_i$, as $n\geq3$ such a hyperplane
exists. Applying the same methodology it follows that at it is not
possible both the $k$-th and the $i$-th coordinate to be nonzero,
hence $T$ has exactly one nonzero coordinate. Taking into
consideration that $|T|$ is equal to the radius of $\Gamma$ the
desired result follows.
\end{proof}

We may consider the following

\begin{conjecture}
Let $A_1,\ldots,A_{2n}$ be different points in  $\mathbb{R}^n$.
Assume that there is an sphere $\Gamma$ with $\dim\Gamma=n-1$,
such that $\sum_{i=1}^{n+1}|MA_i|^{\lambda}$ is independent of the
position of $M$ on $\Gamma$ for $\lambda=2,4,6$, then $A_1,\ldots,
A_{2n}$ are the vertices of an $n$-dimensional cross-polytope.
\end{conjecture}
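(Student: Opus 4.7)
The plan is to extract a hierarchy of moment conditions from the three hypotheses $\lambda=2,4,6$ and then exploit them to force the rigid cross-polytope structure, in the spirit of the planar and tetrahedral cases already handled in the paper. Place the center of $\Gamma$ at the origin, let $r$ be its radius, and for $M\in\Gamma$ write $|M-A_i|^2=r^2+|A_i|^2-2M\cdot A_i$. Expanding $(r^2+|A_i|^2-2M\cdot A_i)^k$ for $k=1,2,3$, summing over $i$, separating by homogeneous degree in $M$, and using that no nonzero polynomial homogeneous of positive degree can be constant on a sphere, one obtains
\[
\sum_i A_i=0,\quad \sum_i|A_i|^2A_i=0,\quad \sum_i|A_i|^4A_i=0,
\]
\[
\sum_i A_iA_i^T=\alpha I,\quad \sum_i|A_i|^2A_iA_i^T=\beta I,\quad \sum_i A_i^{\otimes 3}=0
\]
for some positive scalars $\alpha,\beta$; the last identity says $\sum_i(x\cdot A_i)^3\equiv 0$ as a polynomial in $x$.

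The next goal is to show that all $|A_i|$ are equal. Suppose the squared norms take $m\ge 2$ distinct values $v_1<\dots<v_m$, partitioning the indices into blocks $S_1,\dots,S_m$, and set $W_\ell=\sum_{i\in S_\ell}A_i$, $Q_\ell=\sum_{i\in S_\ell}A_iA_i^T$. The three vector identities form the Vandermonde system $\sum_\ell v_\ell^j W_\ell=0$ for $j=0,1,2$, which forces $W_\ell=0$ on every block as soon as $m\le 3$. The two matrix identities, in the case $m=2$, force each $Q_\ell$ to be a positive multiple of $I$. But a tight frame of $k$ vectors in $\mathbb{R}^n$ requires $k\ge n$, and when $k=n$ it must consist of an orthogonal system of equal length, whose centroid cannot vanish; together with $|S_1|+|S_2|=2n$ this rules out $m=2$. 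The main difficulty is to exclude $m\ge 3$: the Vandermonde bookkeeping alone no longer pins the blocks down, and one must bring in the cubic tensor identity $\sum_i A_i^{\otimes 3}=0$ (possibly together with a hyperplane slicing of $\Gamma$ in the style of Lemma~\ref{hcon}) to derive a contradiction. This is where I expect the bulk of the work to lie.

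Once equality of the $|A_i|$ is established, with common value $r_0$, the points lie on the sphere of radius $r_0$ about the origin, and the moment conditions collapse to $\sum_i A_i=0$, $\sum_i A_iA_i^T=\alpha I$, and $\sum_i A_i^{\otimes 3}=0$; together these are exactly the defining conditions of a spherical $3$-design of cardinality $2n$ in $\mathbb{R}^n$. By the Delsarte--Goethals--Seidel bound, any spherical $3$-design in $\mathbb{R}^n$ has at least $2n$ points, and a tight such design is necessarily the antipodal orthonormal configuration $\{\pm e_1,\dots,\pm e_n\}$ up to orthogonal transformation, that is, the vertex set of an $n$-dimensional cross-polytope centered at the center of $\Gamma$. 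The principal obstacle, as indicated, is the intermediate step excluding configurations supported on more than one concentric sphere; all other ingredients reduce to standard moment calculations, Vandermonde linear algebra, or appeals to the classical rigidity of tight spherical $3$-designs.
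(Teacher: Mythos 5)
First, note that the statement you are proving is posed in the paper as a conjecture, not a theorem: immediately after stating it the authors write that even the three-dimensional case is very difficult and that they have no proof for $n>2$. So there is no proof in the paper to compare yours against, and a complete argument here would be new mathematics rather than a reconstruction.

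Your derivation of the moment identities is essentially correct (with the small caveat that constancy on the sphere only kills the \emph{harmonic} projection of each homogeneous piece; the full tensor identity $\sum_i A_i^{\otimes 3}=0$ does follow, but only after feeding $\sum_i |A_i|^2A_i=0$ back in to annihilate the trace part of the cubic). The endgame is also sound: once all the $A_i$ are known to lie on a single sphere centred at the centre of $\Gamma$, the conditions $\sum_i A_i=0$, $\sum_i A_iA_i^T=\alpha I$ and $\sum_i (x\cdot A_i)^3\equiv 0$ say exactly that the $2n$ points form a spherical $3$-design meeting the Delsarte--Goethals--Seidel bound, and tight spherical $3$-designs are precisely the cross-polytopes. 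The difficulty is that the step you flag as ``where the bulk of the work lies'' --- ruling out configurations spread over $m\ge 3$ concentric spheres --- is not a technical remainder but the entire content of the conjecture. For $m=3$ the two matrix identities no longer determine the individual block matrices $Q_\ell$, and for $m\ge 4$ even the Vandermonde system fails to force the block centroids $W_\ell$ to vanish, so the cubic tensor identity must carry all the weight and you have extracted nothing from it; nor is it clear that a hyperplane-slicing argument in the style of the paper's connectedness lemma helps, since slicing destroys the count $2n$ that makes the design tight. As written, the proposal reduces the conjecture to its hardest sub-case rather than proving it, so there is a genuine gap exactly where the authors themselves report being stuck.
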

While this is true for the planar case, even the three-dimensional case is
very difficult and the authors do not have a proof for $n>2$.

\subsection{Hypercube}
We now begin with the consideration of the hypercube. For ease of
the introduction we shall consider the standard unit cube under
the translation with the vector $\mathbf{v}=(-1/2,\ldots,-1/2)$,
hence all the vertices of the cube have coordinates
$(\pm1/2,\ldots, \pm1/2)$.

We again investigate the extremal properties of the sum
\begin{equation}\label{cube}H_n(M,\lambda)=\sum_{i=1}^{2^n}(|MA_i|^2+h)^{\lambda/2},
\end{equation}
where $A_i$ are the vertices of the $n$-dimensional hypercube in
$\mathbb{R}^n$, and $M$ is a point on a sphere $\Gamma$,
concentric to the sphere circumscribed around the cube.

Previous work on this topic includes \cite{Pacific2}, where
partial results have been obtained f or the case in which $\Gamma$
is the sphere circumscribed around the polytope and
$\lambda\in(0;2)$.

Let $\mathbf{e}_i$ be the standard orthonormal vector base for $\mathbb{R}^n$,
and $r$ the radius of $\Gamma$.
Put $B_iOA_i\bigcap\Gamma$, where $O$ is the center of $\Gamma$.

We shall prove the following theorem
\begin{theorem}
\label{teoremahc}
\begin{enumerate}
\item $\lambda<0$ The maximum of $H_n(M,\lambda)$ is obtained when
$M=B_i$ for some $i$. In the case when $\Gamma$ is the sphere
circumscribed around $A_1\ldots A_{2^n}$ the sum $H_n(M,\lambda)$
is not bounded. The minimum is obtained for the point
corresponding to the vector $r\mathbf{e}_i$ for some $i$. Those
are the points obtained by the intersection of the perpendicular
from $O$ to some $n-1$face of the cube and $\Gamma$. \item
$\lambda\in[0;6]$. If $\lambda$ is an even number, then
$H_n(M,\lambda)$ is independent of the position of $M$ on
$\Gamma$. Otherwise let $2m<\lambda<2m+2$. If $m$ is an even (odd)
integer, then the maximum (minimum) of $H_n(M,\lambda)$ is
obtained when $M$ is the point corresponding to some vector
$r\mathbf{e}_i$,. The minimum(maximum) is achieved when $M\equiv
B_i$ for some $i$. \item $\lambda>6$ Then the maximum of
$H_n(M,\lambda)$ is obtained when $M$ coincides with a vertex
$B_i$, and the minimum is obtained when $M$ is the point
corresponding to some vector $r\mathbf{e}_i$.
\end{enumerate}
\end{theorem}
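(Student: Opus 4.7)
The plan is to argue by induction on the dimension $n$, closely mirroring the proof of Theorem \ref{teoremacp}. The base case $n=2$ is handled directly by Theorem \ref{OT} applied to the regular $4$-gon (with $2n=8$), which gives constancy at $\lambda\in\{0,2,4,6\}$ and the extremal pattern matching cases 1--3.

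For the inductive step, the main tool is slicing by a hyperplane $\pi=\{x_i=c\}$ orthogonal to a coordinate axis. The $2^n$ vertices fall onto two opposite facets, each orthogonally projecting to the same $(n-1)$-cube lying in $\pi$, so we obtain the decomposition
\[
H_n(M,\lambda) = \sum_{A'\in (n-1)\text{-cube}} \bigl((|M'A'|^2 + d_+^2 + h)^{\lambda/2} + (|M'A'|^2 + d_-^2 + h)^{\lambda/2}\bigr)
\]
for $M$ in the codimension-$2$ sphere $\Gamma'=\pi\cap\Gamma$, where $M'$ is the projection of $M$ and $d_\pm$ are the distances from $\pi$ to the two facets. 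Each summand is an instance of the inductive hypothesis for an $(n-1)$-cube with shifted parameter $h'=d_\pm^2+h$. A key observation is that the extremal and constancy locations predicted by Theorem \ref{teoremahc} are independent of $h$, so both halves of the decomposition share the same extremizers and points of constancy.

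The constancy part of case 2 at $\lambda\in\{0,2,4,6\}$ now follows on every such slice $\Gamma'$, and Lemma \ref{hcon} applied to two non-parallel coordinate hyperplanes extends the constancy to all of $\Gamma$. For the upper bound $\lambda\leq 6$ we use a different slice: the $2$-plane $\tau$ through $O$ spanned by $\mathbf{e}_1$ and $\mathbf{e}_2$. Each vertex of the cube projects to one of the four points $(\pm 1/2,\pm 1/2)$ with multiplicity $2^{n-2}$, and each lies at squared distance $(n-2)/4$ from $\tau$. Applying Theorem \ref{OT} to this regular $4$-gon on the great circle $\tau\cap\Gamma$ (with effective parameter $(n-2)/4+h$) shows that only $\lambda\in\{0,2,4,6\}$ can make the sum constant on $\Gamma$.

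For the extremal cases (1, 3, and the non-even sub-cases of 2), suppose the extremum is attained at some $N\in\Gamma$. Taking $\pi$ orthogonal to $\mathbf{e}_i$ through $N$, the inductive hypothesis applied to $H_n|_{\Gamma'}$ forces the projection $N'$ to be the corresponding prescribed extremum of the $(n-1)$-cube problem: either a vertex-direction point (all coordinates of absolute value $\sqrt{(r^2-N_i^2)/(n-1)}$) or a face-perpendicular point (exactly one nonzero coordinate of magnitude $\sqrt{r^2-N_i^2}$). Repeating with a second coordinate direction $j\neq i$ and comparing coordinate by coordinate forces $N$ into one of two rigid patterns: all $|N_k|$ equal, giving $N=B_\ell$ for some $\ell$, or exactly one $N_k$ nonzero, giving $N=\pm r\mathbf{e}_\ell$; the correct pattern for each range of $\lambda$ is specified by the inductive hypothesis. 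The main obstacle is this combinatorial pinning-down from the two slice constraints, which is essentially the argument executed in the proof of Theorem \ref{teoremacp}; the only additional verification required is the $h$-independence of extremal locations, which is immediate from the theorem statement. Degenerate slices where $\pi\cap\Gamma$ shrinks to a point occur precisely at the candidate extremal points $\pm r\mathbf{e}_i$, so they cause no additional difficulty.
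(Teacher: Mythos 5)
Your proposal is correct and follows essentially the same strategy as the paper: induction on dimension, slicing by hyperplanes orthogonal to coordinate axes, decomposing the vertex set into two opposite facets that project to a common $(n-1)$-cube with shifted parameter $h'=d_\pm^2+h$, Lemma \ref{hcon} to propagate constancy from codimension-two slices to all of $\Gamma$, and the two-direction slicing argument to pin the extremizer's coordinates into the two rigid patterns. The one place you genuinely diverge is the ``only $\lambda=0,2,4,6$'' direction: the paper slices with the central hyperplane $x_i=0$, uses the reflection symmetry to collapse the two facet sums into one, and invokes the induction hypothesis, whereas you project all $2^n$ vertices onto the coordinate $2$-plane spanned by $\mathbf e_1,\mathbf e_2$, obtaining $2^{n-2}$ copies of a square at squared height $(n-2)/4$, and apply Theorem \ref{OT} directly to the resulting circle. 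Your variant short-circuits the induction for that sub-step and reduces cleanly to the planar case in one stroke; the paper's variant keeps everything inside the inductive framework. Both are valid. Your handling of degenerate slices is slightly looser than the paper's (which explicitly notes that at most one coordinate direction can degenerate, either dispatching that case immediately for the minimum or choosing a non-degenerate direction for the maximum), but since $n\ge 3$ guarantees two usable non-parallel coordinate directions, this is a presentational rather than a substantive gap.
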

The points $B_i$ can also be characterized by the vectors
$\frac{r}{\sqrt{n}}(\pm1/2,\ldots,\pm1/2)$, and this
characterization we shall use in the proof.

The results for the hypercube are very similar to those obtained
for the cross-polytope. One may attribute this to the polytopes
being dual. It is worth exploring similarities in the properties
of functions of the type of \eqref{cube} for dual polytopes.

\begin{proof}
We shall first prove that \eqref{cube} is independent on the
position of $M$ on $\Gamma$ for $\lambda=0,2,4,6$, and that these
are the only powers for which this is true.

We shall proceed as before with induction on the dimension of the
cube. The result has already been proved for the two-dimensional
case. Assume it to be true for all dimensions less than $n$.

Let $\pi_i$ and $\pi_i'$ be two planes parallel to the hyperplane
defined by $x_i=0$ and containing all of the vertices of the cube
(each hyperplane contains one $n-1$ face of the cube). Let now
$\sigma_i$, be a hyperplane, parallel to $\pi_i$, such that
$\dim\sigma_i\bigcap\Gamma=\Gamma_i=n-2$. We shall prove that
\eqref{cube} is independent on the position of $M$ on $\Gamma_i$
for $\lambda=0,2,4,6$.

Let the $n-1$ face of the hypercube, contained in $\pi_i$ be $S$
and the $n-1$ face contained in $\pi_i'$ be $S'$. The vertices of
each of $S$ and $S'$ are themselves vertices of $n$-dimensional
cubes. Now we have

\begin{eqnarray}\nonumber H_n(M,\lambda)|_{M\in\Gamma_i}=\sum_{i=1}^{2^n}
(|MA_i|^2+h)^{\lambda/2}=\sum_{A_i\in
S}(|M_1A_i|^2+H_1+h)^{\lambda/2}+ \\\sum_{A_i\in
S'}(|M_2A_i|^2+H_2+h)^{\lambda/2}, \end{eqnarray} where $M_1$ is
the projection of the point $M$ in the plane $\pi_i$, $H_1$ is the
squared distance between the planes $\pi_i$ and $\sigma_i$, $M_2$
is the projection of $M$ in $\pi_i'$, and $H_2$ is the squared
distance between the planes $\pi_i'$ and $\sigma_i$. Now as the as
the vertices $A_i\in S$ and $A_j\in S'$ are vertices of lower
dimensional cubes $H_1$ and $H_2$, and $\Gamma_i$ projects in each
of the planes $\pi_i$ and $\pi_i'$ to spheres concentric to the
spheres circumscribed around $H_1$ and $H_2$ from the inductive
hypothesis it follows that each of the sums $\sum_{A_i\in
S}(|M_1A_i|^2+H_1+h)^{\lambda/2}$, $\sum_{A_i\in
S'}(|M_2A_i|^2+H_2+h)^{\lambda/2}$ is independent of the position
of $M$ in $\Gamma_i$ for $\lambda=0,2,4,6$. Taking another two
planes $\pi_j$ and $\pi_j'$, $i\neq j$, defined in the same
fashion as $\pi_i$ and $\pi_i'$ and with the same considerations
it follows that \ref{cube} is also independent of the position of
$M$ on $\Gamma_j$ (defined in the same way as $\Gamma_i$) for
$\lambda=0,2,4,6$. Now by Lemma \ref{hcon} it follows that this
can be extended to the whole sphere $\Gamma$. To prove that
$\lambda=0,2,4,6$ are the only powers with that property we need
only consider a plane $\sigma_i$, defined by $x_i=0$, then taking
$\pi_i$ and $\pi_i'$ as before, we have
\begin{eqnarray}\nonumber H_n(M,\lambda)|_{M\in\Gamma_i}=
\sum_{i=1}^{2^n}(|MA_i|^2+h)^{\lambda/2}
=\sum_{A_i\in S}(|M_1A_i|^2+H_1+h)^{\lambda/2}+ \\
\label{cubeplanes}+\sum_{A_i\in
S'}(|M_2A_i|^2+H_2+h)^{\lambda/2}=2\sum_{A_i\in
S}(|M_1A_i|^2+H_1+h)^{\lambda/2},\end{eqnarray} due to symmetry
and the desired result follows immediately from the induction
hypothesis, as $M_1$ belongs to sphere concentric to the sphere
circumscribed around the $n-1$-dimensional cube $H_1$.

Now we shall prove the extremal cases of Theorem~\ref{teoremahc}.

We will only consider the case $\lambda<0$ as other cases can be
proved in the same way. We proceed by induction. The have already
proved the claim for planar case.

Assume that Theorem~\ref{teoremahc} is true and assume that the
maximum of \ref{cube} is achieved at some point $N$. We shall
prove that $N=(\pm c,\ldots, \pm c)$. Consider a plane $\sigma_i$,
parallel to the plane $x_i=0$ and as above consider the planes
$\pi_i$ and $\pi_i'$. Suppose that
$\dim\sigma_i\bigcap\Gamma=\Gamma_i=n-2$, this is possible as for
every point $N$ there is exactly one class of nonparallel
hyperplanes such that $\dim\sigma_i\bigcap\Gamma=\Gamma_i=0$ and
in the case we consider $n\geq3$. Now again we consider
$H_n(M,\lambda)|_{\Gamma_i}$. As the global maximum of
\eqref{cube} is achieved in $N$, so it is the local, hence $\max
H_n(M,\lambda)|_{\Gamma_i}=H_n(N,\lambda)|_{\Gamma_i}$ for a fixed
$\lambda<0$.

We again have the identity \eqref{cubeplanes} holds, and due to
symmetry and the induction hypothesis it follows that $N_1$ (the
projection of $N$ in $\pi_i$) is the point for which the sum
$\sum_{A_i\in S}(|M_1A_i|^2+H_1+h)^{\lambda/2}$ is maximized and
$N_2$ (the projection of $N$ in $\pi_i'$) the point for which
$\sum_{A_i\in S'}(|M_2A_i|^2+H_2+h)^{\lambda/2}$ is maximized.
Hence again by the induction hypothesis it follows that the all
the coordinates of $N$, except the $i-th$ are of the form $\pm
c_0$. Taking another plane $\sigma_j$, $j\neq i$, defined in the
same fashion (this is possible as $n\geq3$) and repeating the
above reasoning we obtain that all the coordinates of $N$, except
the $j-$th are of the form $\pm c_1$. Now as $n\geq3$ it follows
that all coordinates of $N$ are $\pm c$, taking into account that
$|ON|=r$ the conclusion follows.

Assume that the minimum of \eqref{cube} is obtained for a point
$T$. We now begin with the proof of the minimal case. Again we
consider a plane $\sigma_i$, parallel to the plane $x_i=0$
containing $T$. Assume that
$\dim\sigma_i\bigcap\Gamma=\Gamma_i=0$, and in that case we have
the desired result $N=\mathbf{e}_i$. Assume now that
$\dim\sigma_i\bigcap\Gamma=\Gamma_i=n-2$.

Now we again consider $H_n(M,\lambda)|_{\Gamma_i}$, for which
\eqref{cubeplanes} holds, and again by symmetry and the induction
hypothesis we have that $T_1$ (the projection of $T$ in $\pi_i$)
is the point for which the sum $\sum_{A_i\in
S}(|M_1A_i|^2+H_1+h)^{\lambda/2}$ is minimized and $T_2$ (the
projection of $T$ in $\pi_i'$) the point for which  $\sum_{A_i\in
S'}(|M_2A_i|^2+H_2+h)^{\lambda/2}$ is minimized. It follows that
among all the coordinates of $N$, except the $i-$th there is at
most one nonzero, say the $j-$th. Now if we again consider a
hyperplane $\sigma_k$, $k\neq i,j$ defined in the same fashion as
$\sigma_i$ (this is possible due to $n\geq3$ and the consideration
of $\dim\sigma_i\bigcap\Gamma=\Gamma_i=0$ done in the beginning)
and repeating the above considerations it follows that among all
coordinates of $T$, except the $k-$th there is at most one
nonzero. Now we easily get that $N$ has at most one nonzero
coordinate. As we have that $|OT|=r$ we obtain the desired result.
\end{proof}

Another question which we can consider is the following: Let
$A_1,\ldots,A_{2^n}$ be different points in  $\mathbb{R}^n$.
Assume that there is a sphere $\Gamma$ with $\dim\Gamma=n-1$, such
that $\sum_{i=1}^{2^n}|MA_i|^{\lambda}$ is independent of the
position of $M$ on $\Gamma$ for $\lambda=2,4,6$, then $A_1\ldots
A_{2^n}$ are the vertices of an $n$-dimensional cube.

But this is not generally true. In $\mathbb{R}^n$, $n$ being a
power of two greater than first, we can consider $2^n/2n$
different rotations under which the images of no two points
coincide of the cross-polytope. The set of points obtained in this
way satisfies the condition. No counterexamples are known to the
authors in different dimensions.

One can see many similarities between the cases with the
cross-polytope and the cube. We have proved the following

Take the standard cross-polytope $A_1\ldots A_{2n}$ and the unit
cube described in the beginning of this section $B_1\ldots
B_{2^n}$, and consider each of the sums \eqref{crossp} and
\eqref{cube}. Assume that for some $\lambda$ the minimum of
$\eqref{crossp}$ is obtained at some point $N$, then the maximum
of \eqref{cube} is obtained at $N$ and vice versa. This may be to
the fact of the duality of the two polytopes. Here based on the
results in the planar case, the results for the regular simplex,
and the dual pair cube, cross-polytope for which it holds, we  can
formulate the following:

\begin{conjecture}
Let $P$ be an $n$-dimensional regular polytope, in $\mathbb{R}^n$,
inscribed in the unit sphere $U$. Let $S$ be the polytope obtained
under the polar reciprocation of $P$ in $U$. Put $P'$ be a
homothetic polytope of $S$, inscribed in $U$. Let $\mathbf{V}$ be
the set of vertices of $P$, and $\mathbf{V'}$ the set of vertices
of $P'$. Consider the sums
\begin{equation}\label{1}\sum_{A\in\mathbf{V}}|MA|^{\lambda},\end{equation}
\begin{equation}\label{2}\sum_{A\in\mathbf{V'}}|MA|^{\lambda},\end{equation}
where $M$ belongs to some sphere concentric with the units sphere. Assume that

the maximum of \eqref{1} is obtained for some point $N$, then the
minimum of \eqref{2} is obtained at $N$, and vice versa. If
\eqref{1} is independent of the movement of $M$ on $\Gamma$ then
so is \eqref{2} and vice versa.
\end{conjecture}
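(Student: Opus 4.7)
The plan is to exploit the fact that $P$ and $P'$ share the same symmetry group $G\subset O(n)$, so that both $f_P(M)=\sum_{A\in\mathbf V}|MA|^\lambda$ and $f_{P'}(M)=\sum_{A\in\mathbf V'}|MA|^\lambda$ are smooth $G$-invariant functions on $\Gamma$. A standard critical-point argument (fixed sets of maximal subgroups of $G$ must be critical) forces the candidate extrema on $\Gamma$ to lie on the two distinguished $G$-orbits $\Omega_{\mathbf V}$ and $\Omega_{\mathbf V'}$, namely the radial projections onto $\Gamma$ of the vertices of $P$ and of $P'$ respectively; the latter are precisely the face-centres of $P$. The task thus reduces to deciding which orbit carries the maximum and which the minimum for each function, and showing that these assignments swap under $P\mapsto P'$.

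For this I would pass to the ultraspherical expansion
$$|MA|^\lambda=\sum_{k\ge 0}\gamma_k(\lambda,R,r)\,C_k^{((n-2)/2)}(\langle\hat M,\hat A\rangle),\qquad \hat M=M/R,\ \hat A=A/r,$$
obtaining $f_P(M)=\sum_{k\ge 0}\gamma_k(\lambda,R,r)\,F_k^{\mathbf V}(\hat M)$ with $F_k^{\mathbf V}(\hat M)=\sum_{A\in\mathbf V}C_k^{((n-2)/2)}(\langle\hat M,A/r\rangle)$. Each $F_k^{\mathbf V}$ is a $G$-invariant spherical harmonic of degree $k$; for every regular polytope the $G$-invariant subspace of degree-$k$ harmonics is at most one-dimensional in the range of $k$ that matters, so $F_k^{\mathbf V}$ is either identically zero or a scalar multiple of a canonical invariant $\Phi_k$. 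The independence half of the conjecture then reads: $f_P$ is constant exactly when every $F_k^{\mathbf V}$ with $k>0$ and $\gamma_k(\lambda,\cdot)\ne 0$ vanishes, i.e.\ exactly when $\mathbf V$ is a spherical design of sufficient strength. Since polar reciprocation preserves this design strength on the vertex set of a regular polytope (verifiable family by family), this would deliver the independence assertion.

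For the extremal half the crucial ingredient is a \emph{sign-reversal identity}: for the smallest order $k$ at which $F_k^{\mathbf V}$ is non-constant -- and which governs the extremal behaviour in the relevant $\lambda$-range -- the canonical invariant $\Phi_k$ should take strictly opposite signs on $\Omega_{\mathbf V}$ and on $\Omega_{\mathbf V'}$, while the assignment of those signs to the two orbits should flip when the source vertex set is swapped from $\mathbf V$ to $\mathbf V'$. Combined with the control of the sign of $\gamma_k(\lambda,R,r)$ through the parity of $\lfloor\lambda/2\rfloor$ -- essentially the same mechanism that already drives Theorems~\ref{OT}, \ref{simplex}, \ref{teoremacp} and \ref{teoremahc} -- this yields the asserted max/min swap. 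The hard part is exactly this sign-reversal: morally, polar duality restricted to the one-dimensional quotient $\Gamma/G$ interchanges the two endpoints of the fundamental segment, and the lowest non-trivial $G$-invariant harmonic, being odd with respect to that involution, must change sign. Formalising this uniformly across the simplex, the dual pairs (cube/cross-polytope, dodecahedron/icosahedron, $120$-cell/$600$-cell) and the self-dual $24$-cell -- rather than by case analysis -- seems to need a representation-theoretic input going well beyond the elementary slicing arguments used in this paper, which is why the authors state the result only as a conjecture.
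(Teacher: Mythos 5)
The statement you are trying to prove is stated in the paper as a \emph{conjecture}: the authors offer no proof, explicitly noting that it has only been checked for the regular polygons, the simplex, and the cube/cross-polytope pair (via Theorems \ref{OT}, \ref{simplex}, \ref{teoremacp}, \ref{teoremahc}), and that the icosahedron/dodecahedron and the $120$-cell/$600$-cell pairs remain open. Your proposal is likewise not a proof but a programme, and you candidly flag its central missing step (the ``sign-reversal identity''); so the honest verdict is that neither you nor the paper establishes the statement. That said, several links in your chain are weaker than you present them. First, the ``standard critical-point argument'' only shows that the orbits fixed by large subgroups of $G$ are \emph{critical}; it does not force the global extrema onto the vertex orbit and the face-centre orbit. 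For $n\ge 3$ the quotient $\Gamma/G$ is an $(n-1)$-dimensional spherical simplex whose distinguished points include the projections of edge midpoints, $2$-face centres, etc., all of which are equally legitimate candidates for extrema and must be excluded; doing exactly this is the hard content of Theorems \ref{simplex}, \ref{teoremacp} and \ref{teoremahc}. Your closing picture of a ``one-dimensional quotient $\Gamma/G$'' with two endpoints is literally correct only for $n=2$.

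Second, the independence half of your argument rests on the claim that polar reciprocation preserves the spherical-design strength of the vertex set, ``verifiable family by family.'' This is precisely what is not currently verifiable: the paper's own Propositions \ref{dprop2} and \ref{dprop3} pin the constant exponents for the dodecahedron only between $\{2,4,6,8,10\}$ and $\{2,4,\dots,18\}$ (with Lemma \ref{OL} giving at most eight such values), and the matching statement for the icosahedron is likewise incomplete, so equality of design strengths for that dual pair is exactly the open problem, not an input one may assume. Third, the assertion that the $G$-invariant harmonics of each relevant degree form a space of dimension at most one is stated without justification and is delicate for the exceptional groups. In short, your expansion into Gegenbauer polynomials and the sign of $\gamma_k(\lambda,R,r)$ governed by the parity of $\lfloor\lambda/2\rfloor$ is a reasonable reformulation of the mechanism already visible in Theorem \ref{OT}, but every step that would turn the reformulation into a proof --- localisation of extrema to the two dual orbits, equality of design strengths under duality, one-dimensionality of the invariant isotypic components, and the sign reversal itself --- is either missing or is the conjecture restated in different language.
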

Trough the results proved so far we have verified this proposition
for the regular polygons,  regular simplex, and the dual pair
cube, cross-polytope. It remains to be proved for the dual pairs
icosahedron and the dodecahedron, the $600$-cell and the $120$-
cell in $4$-dimensional space.
\medskip

{\bf Acknowledgement.} The authors would like to thank the referee
for his comments and remarks, which helped improve the clarity of
the paper.

\end{document}